\newcommand \al{\alpha}
\newcommand \Aut{\operatorname{Aut}}
\newcommand \bs{\backslash}
\newcommand \C{{\mathbb C}}
\newcommand \CC{\mathcal{C}}
\newcommand \CK{\mathcal{K}}
\newcommand \CQ{\mathcal{Q}}
\newcommand \CT{\mathcal{T}}
\newcommand \Cone{\operatorname{Cone}}
\newcommand \D{{\mathbb D}}
\newcommand \ds{\displaystyle}
\newcommand \End{\operatorname{End}}
\newcommand \eps{\varepsilon}
\newcommand \Ga{\Gamma}
\newcommand \ga{\gamma}
\newcommand \la{\lambda}
\newcommand \La{\Lambda}
\newcommand \lab{\operatorname{lab}}
\newcommand \mqed{\tag*\qedhere}
\newcommand \N{{\mathbb N}}
\newcommand \R{{\mathbb R}}
\newcommand \sm{\smallsetminus}
\newcommand \tr{\operatorname{tr}}
\newcommand \val{\operatorname{val}}
\newcommand \Z{{\mathbb Z}}
\renewcommand \a{\mathfrak a}
\renewcommand \b{\mathfrak b}
\renewcommand \({\left(}
\renewcommand \){\right)}
\newcommand{\e}
[1]{\emph{#1}\index{#1}}
\newcommand{\norm}
[1]{\left\|#1\right\|}
\renewcommand{\sp}
[1]{\left\langle #1\right\rangle}
\newcommand{\tto}
[1]{\stackrel{#1}{\longrightarrow}}
\newtheorem{theorem}{Theorem}[section]
\newtheorem{lemma}[theorem]{Lemma}
\newtheorem{proposition}[theorem]{Proposition}
\theoremstyle{definition}
\newtheorem{definition}[theorem]{Definition}
\newtheorem{example}[theorem]{Example}
\begin{document}

\pagestyle{myheadings} \markright{WEIGHTED PRIME GEODESIC THEOREMS}

\title{Weighted prime geodesic theorems}
\author{Anton Deitmar}
\date{}
\maketitle

{\bf Abstract:} Prime geodesic theorems for weighted infinite graphs and weighted building  quotients are given.
The growth rates are expressed in terms of the spectral data of suitable translation operators inspired by a paper of Bass.

$ $

{\bf MSC: 11N80}, 11F72, 11N05, 20E08, 20F65, 51E24, 53C22

$$ $$

\tableofcontents

\newpage
\section*{Introduction}

The first Prime Geodesic Theorem was given by Huber in \cite{Huber}.
It states that for a compact hyperbolic surface $X$ the number $N(T)$ of prime closed geodesics of lenght $\le T$ satisfies
$$
N(T)\sim \frac{e^{2T}}{2T},\quad\text{as}\quad T\to\infty.
$$
It has been sharpened by giving estimates on the error term and it has been extended to other manifolds \cite{Hejhal,Iwaniec,Katsunada,Koyama,ADhigherRank,Sound}.
Applications to class numbers are  in \cite{Sarnak}, and, extending this result, in \cite{class,classNC}.
It was extended to more general dynamical systems, culminating in Margulis's celebrated result on Anosov flows, stating that the number $N(T)$ of closed orbits of length $\le T$ satisfies $N(T)\sim\frac{e^hT}{hT}$, where $h$ is the entropy of the system \cite{Margulis}.

An extension to the graph case was formulated in \cite{Hashi,ADCont}.
In \cite{weightedIhara}, zeta functions of graphs with weights were introduced.
Here weights are representing resistance to a flow along the edges  or a individual distribution of the flow at the nodes.
In this paper we present a notion flexible enough to take into account these different requirements.

The paper \cite{PGTbuild} gives Prime Geodesic Theorems for  compact quotients of buildings, generalizing the graph case.
This generalization is natural, as in number theoretical situations, graphs and building quotients both turn up in the $p$-adic setting.
In the present paper the latter two ideas are combined in stating  Prime Geodesic Theorems  for weighted infinite graphs and weighted building quotients.
In the latter case, a full expansion of the numbers of closed geodesics of a given length  is presented.

The first section treats an extension of the Perron-Frobenius Theorem suitable for our purposes.
The next two sections deal with the graph case and the last three with building quotients.

\section{$E$-positive operators}\label{sec1}

\begin{definition}
Let $H$ be a Hilbert space and $E$ an orthonormal basis. 
Let $H^+_E$ denote the cone of all $\sum_{e\in E}c_ee\in H$ with $c_e\ge 0$ for every $e\in E$.
A bounded linear operator $T$ on $H$ is called \e{$E$-positive}, if $T(H^+_E)\subset H^+_E$.
This is equivalent to
$$
\sp{Te,f}\ge 0
$$
for all $e,f\in E$.
Note that the adjoint $T^*$ is $E$-positive if $T$ is.
Further, if $S$ and $T$ are $E$-positive, then so is $ST$.
\end{definition}

\begin{definition}
We say that a bounded  operator $T:H\to H$ is \e{$E$-reducible}, if there exists a proper subset 
$\emptyset\ne F\subsetneq E$ such that
$$
T(F)\subset\ell^2(F),
$$
where by $\ell^2(F)$ we mean the closure of the span of $F$.
This is in accordance with the isomorphism $H\cong\ell^2(E)$.
In this case, the closed subspace $\ell^2(F)$ is $T$-stable.

If $T$ is not $E$-reducible, we say that $T$ is \e{$E$-irreducible}.
\end{definition}

\begin{definition}
For a compact operator $T$ and an eigenvalue $\la$ the \e{geometric multiplicity} is the dimension of the eigenspace $\ker(T-\la)$.
In this situation, the sequence $\ker(T-\la)^n$, $n\in\N$ is eventually stationary.
The \e{algebraic multiplicity} is the limit
$$
\lim_{n\to\infty}\dim\ker(T-\la)^n\in\N.
$$
\end{definition}

\begin{theorem}
Let $T$ be a positive compact operator on $H$ with positive spectral radius $r=r(T)>0$.
Then the spectral radius $r$ is an eigenvalue of $T$.
Assume that $T$ is $E$-irreducible and trace class. Then the algebraic multiplicity of $r$ is 1.
The eigenvalues $\la$ with $|\la|=r$ distribute evenly over the unit circle times $r$. More precisely, if $\la_0=r,\la_1,\dots,\la_{n-1}$ are all eigenvalues of $T$ with $|\la_j|=r$, then we can order them in a way that $\la_j=e^{2\pi ij/n}$.
Finally, every $\la_j$ has algebraic multiplicity 1.
\end{theorem}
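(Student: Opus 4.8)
The plan is to combine a Krein--Rutman-type existence argument (with compactness substituting for finite dimensionality) with the classical Perron--Frobenius cyclicity analysis. The one genuinely infinite-dimensional difficulty is that the usual ``minimal coordinate ratio'' argument breaks down: for an $\ell^2$-eigenvector the infimum $\inf_e v_e/(v_0)_e$ need not be positive or attained, and $H^+_E$ has empty interior once $E$ is infinite. The device that repairs this is to argue throughout by pairing against a \emph{strictly positive} eigenvector $w_0$ of $T^*$, turning ``$u\ge 0$ and $\sp{u,w_0}=0$'' into ``$u=0$''. Producing $v_0$ and $w_0$ in the first place is the content of a Pringsheim-type domination estimate for the resolvent, and is the step I expect to be the main obstacle.

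\textbf{Step 1 (the eigenvalue $r$ and positive eigenvectors).} For $|z|>r$ expand $R(z)=(z-T)^{-1}=\sum_{n\ge 0}z^{-n-1}T^n$. Since products of $E$-positive operators are $E$-positive, $\sp{T^ne,f}\ge 0$ for all $e,f\in E$, so for $u,v\in H^+_E$ and $t=|z|>r$ one gets $|\sp{R(z)u,v}|\le\sp{R(t)u,v}$; decomposing an arbitrary vector into at most four elements of $H^+_E$ of no larger norm yields $\|R(z)\|\le 16\,\|R(|z|)\|$ for $|z|>r$. Because $r=r(T)$, some $\la_1\in\sigma(T)$ has $|\la_1|=r$, and by Riesz--Schauder theory $\la_1$ is an eigenvalue and a pole of $R$; hence $\|R((1+1/k)\la_1)\|\to\infty$, and by the above estimate $\|R((1+1/k)r)\|\to\infty$. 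Thus $R$ is unbounded near $r$, so $r\in\sigma(T)$, and being a nonzero spectral value of a compact operator it is an eigenvalue. If $m$ is the pole order of $R$ at $r$, the limit $B=\lim_{t\downarrow r}(t-r)^mR(t)$ exists, is $E$-positive, nonzero, and satisfies $(T-r)B=0$; choosing $f\in E$ with $Bf\ne 0$ gives $v_0:=Bf\in H^+_E\setminus\{0\}$ with $Tv_0=rv_0$. Applying the same to the $E$-positive operator $T^*$, which has $r(T^*)=r$, yields $w_0\in H^+_E\setminus\{0\}$ with $T^*w_0=rw_0$. (The trace-class hypothesis enters this step only through compactness; alternatively one may run it via the Fredholm determinant, noting that $-\log\det(1-zT)=\sum_{n\ge1}\frac{z^n}{n}\tr(T^n)$ has nonnegative coefficients and radius of convergence $1/r$.)

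\textbf{Step 2 (strict positivity and simplicity of $r$).} If $F=\{e\in E:\sp{v_0,e}>0\}$ were a proper nonempty subset, then $rv_0=Tv_0$ and the nonnegativity of the entries force $\sp{Te,f}=0$ for $e\in F$, $f\notin F$, i.e. $T(F)\subset\ell^2(F)$, contradicting $E$-irreducibility; hence $v_0$, and likewise $w_0$ (one checks $T^*$ is $E$-irreducible as well), has all coordinates strictly positive. For a real eigenvector $v$ with $Tv=rv$, the triangle inequality gives $T|v|-r|v|\in H^+_E$ while pairing with $w_0$ gives $\sp{T|v|-r|v|,w_0}=0$; strict positivity of $w_0$ then forces $T|v|=r|v|$, so $v^+$ and $v^-$ are both $r$-eigenvectors, each strictly positive unless zero, whence $v$ has constant sign and, after normalization, $v\ge 0$ is strictly positive. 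Then $u=v-cv_0$ with $c=\sp{v,w_0}/\sp{v_0,w_0}>0$ is an $r$-eigenvector with $\sp{u,w_0}=0$; the same argument shows $u$ has constant sign, hence is strictly positive or zero, and orthogonality to $w_0$ forces $u=0$. Thus $\dim\ker(T-r)=1$. Algebraic simplicity follows: if $(T-r)u=v_0$ for some real $u$, pairing with $w_0$ gives $\sp{v_0,w_0}=0$, impossible; so $\ker(T-r)^2=\ker(T-r)$ and the algebraic multiplicity of $r$ is $1$.

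\textbf{Step 3 (the critical circle).} Let $\la=re^{i\theta}\in\sigma(T)$ with eigenvector $z$. As in Step~2, $|z|$ is an $r$-eigenvector, so after scaling $|z|=v_0$ and $z=\sum_e v_{0,e}e^{i\phi_e}e$ with $v_{0,e}=\sp{v_0,e}>0$. Equality in $|\sum_f\sp{Tf,e}z_f|=\sum_f\sp{Tf,e}|z_f|$ forces $e^{i\phi_f}=e^{i\theta}e^{i\phi_e}$ whenever $\sp{Tf,e}\ne 0$, so the diagonal unitary $U=\operatorname{diag}(e^{i\phi_e})_{e\in E}$ satisfies $U^{-1}TU=e^{i\theta}T$. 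Hence $\sigma(T)=e^{i\theta}\sigma(T)$, and the set $\{\la/r:\la\in\sigma(T),\ |\la|=r\}$, which is finite by compactness and contains $1$, is closed under multiplication by each of its elements, hence is a finite subgroup of $\T$, i.e. $\{e^{2\pi ij/n}:0\le j<n\}$; ordering the critical eigenvalues accordingly gives $\la_j=re^{2\pi ij/n}$. Finally, the conjugacy $U^{-1}TU=e^{i\theta}T$ shows $T$ is similar to $e^{i\theta}T$ for each such $\theta$, so the algebraic multiplicity of $re^{-i\theta}$ in $T$ equals that of $r$, namely $1$; as $e^{-i\theta}$ runs over all the critical roots of unity, every $\la_j$ has algebraic multiplicity $1$.
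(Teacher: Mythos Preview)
Your proof is correct and essentially self-contained, whereas the paper's proof proceeds by citation: it invokes the Krein--Rutman theorem for the existence of the eigenvalue $r$ and the positive eigenvector $v_0$, then uses a Fredholm-determinant/adjugate argument to establish algebraic simplicity of $r$, and finally defers the cyclicity of the peripheral spectrum and the simplicity of the remaining $\la_j$ to Schaefer's book on Banach lattices.

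The main methodological differences are these. First, your Pringsheim-type resolvent estimate $\|R(z)\|\le 16\,\|R(|z|)\|$ replaces the black-box appeal to Krein--Rutman; this is more elementary and makes transparent why compactness suffices. Second, for algebraic simplicity of $r$ you use the dual eigenvector $w_0$ of $T^*$ and the pairing identity $\sp{(T-r)u,w_0}=0$, while the paper computes $B(u)=\det(1-uT)(1-uT)^{-1}$ and argues that $1/r$ is a simple zero of the Fredholm determinant. Your argument is shorter and, notably, does not use the trace-class hypothesis at all; compactness alone is enough. Third, for the peripheral spectrum you carry out the Wielandt conjugacy $U^{-1}TU=e^{i\theta}T$ explicitly, deducing both the group structure of $\{\la_j/r\}$ and the equality of algebraic multiplicities by similarity, whereas the paper cites Schaefer after checking a technical condition. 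Your route is again more hands-on and shows directly why irreducibility is needed (to guarantee $v_{0,e}>0$ so that the phases $\phi_e$ are well-defined). In short: the paper's proof is shorter on the page because it outsources the work; yours is longer but gives strictly more, including a mild strengthening (trace class can be relaxed to compact).
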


\begin{proof}
If $\dim H<\infty$, this is the Theorem of Frobenius, see \cite{Gant}.
So we assume $H$ to be infinite-dimensional.
We first show that if $T$ is $E$-irreducible, then so is the adjoint operator $T^*$.
For this assume $T$ $E$-irreducible and let $F\subset E$ be such that $T^*$ maps $\ell^2(F)$ to itself.
Then for every $f\in F$ and every $e\in E\sm F$ we have
$$
0=\sp{T^*f,e}=\sp{f,Te}.
$$
This implies that $T$ maps $\ell^2(E\sm F)$ to itself, hence $F$ is either empty or equals $E$, which means that $T^*$ is $E$-irreducible.

The fact that $r=r(A)$ is an eigenvalue is known as the \e{Krein-Rutman Theorem}, see for instance Theorem 7.10 in \cite{Abram}.
This theorem also says that there exists an eigenvector $v_0$ for the eigenvalue $\la_0=r$ which is $E$-positive in the sense that $\sp{v_0,e}\ge 0$ for every $e\in E$.
Let $F$ be the set of all $f\in E$ such that $\sp{v_0,f}=0$.
For $f\in F$ we write $T^*f=\sum_{e\in E}c_ee$. have
$$
0=r\sp{v_0,f}=\sp{Tv_0,f}=\sp{v_0,T^*f}=\sum_{e\in E} c_e\sp{v_0,e}.
$$
Since $T^*$ is positive, $c_e\ge 0$ and so we have $e\in F$ if $c_e\ne 0$.
This means that $\ell^2(F)$ is $T^*$-stable, hence, since $v_0\ne 0$, we get $F=\emptyset$.
So we conclude that $\sp{v_0,e}>0$ for every $e\in E$. 
This means that $v_0$ is a \e{totally $E$-positive} vector. 

We now assume that $T$ is $E$-irreducible, $E$-positive and trace class.
Then the Fredholm determinant $\det(1-uT)$ is an entire function.
For $u\in\C$ such that $\frac1u\notin \sigma(T)$ we set
$$
B(u)=\det(1-uT)(1-uT)^{-1}.
$$
For each  $\la\in\C$ let
$H(\la)$ be the largest $T$-stable subspace on which $T$ has spectrum $\{\la\}$.
If $\la\ne 0$, then $H(\la)$ is finite-dimensional and
$$
H(\la)=\bigcup_{n=1}^\infty \ker(T-\la)^n.
$$
Let $H^r=\bigoplus_{\la\ne r}H(\la)$.
Then $H(r)$ and $H^r$ are closed, $T$-stable subspaces and
$$
H=H(r)\oplus H^r.
$$
Then $\det(1-uT|_{H(r)})(1-uT|_{H(r)})^{-1}=(1-uT|_{H^r)})^\#$,
where the $\#$ indicates the adjugate matrix.
This implies that $B(u)$ extends continuously to $u=1/r$.
Applying $B(1/r)$ to a vector in $H_{<r}$ we see that $B(1/r)\ne 0$.
For $0<u<1/r$ and $e,f\in E$ we have
\begin{align*}
\sp{B(u)e,f}&=\det(1-uT)\sp{(1-uT)^{-1}e,f}\\
&=\underbrace{\det(1-uT)}_{\in\R}\underbrace{\sum_{n=0}^\infty u^u\sp{T^ne,f}}_{\ge 0}.
\end{align*}
Letting $u$ tend to $1/r$ we conclude that there is a sign $\sigma\in\{\pm1\}$ such that $\sigma B(1/r)$ is $E$-positive.
Since $v_0$ is totally positive, we have
$$
0\ne B(1/r)v_0=\lim_{u\nearrow 1/r}\det(1-uT)(1-uT)^{-1}v_0=\lim_{u\nearrow 1/r}\det(1-uT)\frac1{1-ur}v_0.
$$
This implies that $1/r$ is a simple zero of $\det(1-uT)$, which is to say that the algebraic multiplicity is 1.
Therefore the condition (G) of Definition 4.7 in \cite{Schae} is satisfied, and hence the remaining points of the theorem follow from Theorem 5.2 in \cite{Schae}.
\end{proof}

We finally consider the situation without the condition of irreducibility.

\begin{proposition}\label{prop1.5}
Let $T$ be a compact operator with positive spectral radius $r(T)>0$.
Then for a given orthonormal basis $E$ there exists a disjoint decomposition
$E=E_1\cup\dots\cup E_n$ with the following properties:
\begin{enumerate}[\rm (a)]
\item The space $\ell^2(E_1\cup\dots\cup E_k)$ is $T$-stable for each $k$.
\item
For each $k$ let  $P_k$ denote the orthogonal projection onto $\ell^2(E_k)$ then the operator
$$
T_k=P_kT:\ell^2(E_k)\to\ell^2(E_k)
$$
either has a spectral radius $r(T_j)<r(T)$ or is $E$-irreducible.
\end{enumerate}
If $T$ is $E$-positive, then $T_k$ is $E_k$-positive for each $k$.
If $T$ is trace class, then each $T_k$ is and for the Fredholm determinant one has
$$
\det(1-uT)=\prod_{k=1}^n\det(1-uT_k),\quad u\in\C.
$$
\end{proposition}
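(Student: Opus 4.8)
The plan is to read the decomposition off the directed graph $\Ga$ on the vertex set $E$ that has an edge $e\to f$ whenever $\sp{Te,f}\ne0$. A union $F$ of vertices has $\ell^2(F)$ $T$-stable precisely when $F$ is closed under edges, and if $C$ is a strongly connected component of $\Ga$ then the induced subgraph on $C$ is strongly connected, so $T_C:=P_CT|_{\ell^2(C)}$ is $C$-irreducible; moreover $T_C$ is the operator $T$ induces on the subquotient $\ell^2(F_C)/\ell^2(F_C\sm C)$, where $F_C$ is the edge-closed set of components reachable from $C$, so (using that the complement of $\sigma(T)$ is connected for compact $T$, whence spectra pass to invariant subspaces and to quotients) $\sigma(T_C)\subset\sigma(T)$ and $r(T_C)\le r(T)$. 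First I would note that $\norm{T_C}\to0$ as $C$ ranges over the at most countably many components: unit vectors picked in distinct $\ell^2(C)$ form an orthonormal sequence, hence tend weakly to $0$, and compactness of $T$ sends them to $0$ in norm. Consequently only finitely many $C$ have $r(T_C)\ge\eps$ for a given $\eps>0$, so $\rho:=\max_Cr(T_C)$ is attained.

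The crucial step is to prove $\rho=r(T)$. Suppose $\rho<r(T)$ and write $T=D+N$, with $D=\bigoplus_CT_C$ the block-diagonal part (compact, since $\norm{T_C}\to0$, with $r(D)\le\rho$) and $N=T-D$ the off-diagonal part (still compact). For $|u|<1/\rho$ the operator $R_u=(1-uD)^{-1}$ is bounded and $1-uT=(1-uD)(1-uR_uN)$. Now $uR_uN$ is compact and preserves a nest of $T$-stable closed subspaces whose atoms are the $\ell^2(C)$ — for instance the subspaces $\ell^2(\{C:C_0\preceq C\})$ for $C_0$ a component and $\preceq$ a linear extension of the reachability order; since $N$ maps $\ell^2(C_0)$ into the span of the strictly $\preceq$-larger components and $R_u$ preserves each $\ell^2(C)$, the compression of $R_uN$ to every atom vanishes. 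By a theorem of Ringrose, a compact operator with this property is quasinilpotent, so $1-uR_uN$, hence $1-uT$, is invertible for all $|u|<1/\rho$; thus $\sigma(T)\subset\{|z|\le\rho\}$, contradicting $r(T)>\rho$. (When $T$ is trace class one can sidestep Ringrose: a closed walk lies in one component, so $\tr T^k=\sum_C\tr(T_C^k)$, and $\norm{T_C}\to0$ gives $r(T)=\limsup_k|\tr T^k|^{1/k}\le\rho$.) In particular some component is ``large'', i.e. $r(T_C)=r(T)$, and only finitely many are, say $C_1,\dots,C_m$.

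Now comes the regrouping. Call a component small if $r(T_C)<r(T)$; the small-union lemma states that an edge-closed union $G$ of small components satisfies $r(T|_{\ell^2(G)})<r(T)$: the components of $T|_{\ell^2(G)}$ are exactly the components inside $G$ (none straddles $G$ and its complement because $G$ is edge-closed), all small, and the identity $\rho=r(T)$ applied to $T|_{\ell^2(G)}$ gives the claim. Then I peel components off ``from the sinks'': among the large components still present, pick one, $C'$, from which no other remaining large component is reachable inside the remaining vertex set; the set $U'$ of components reachable from $C'$ inside that set is edge-closed, $U'\sm C'$ is an edge-closed union of small components, and $\ell^2(U')/\ell^2(U'\sm C')\cong\ell^2(C')$ carries $T_{C'}$. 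This contributes the block $U'\sm C'$ (block operator of spectral radius $<r(T)$ by the lemma, possibly empty) followed by the irreducible block $C'$; afterwards replace $T$ by the operator it induces on $\ell^2$ of the still-unremoved vertices — again compact, with the same components there — and repeat. After $m$ rounds the large components are exhausted, and a final block is whatever remains, again of spectral radius $<r(T)$ by the lemma. Listing the blocks in the order peeled produces $E=E_1\cup\dots\cup E_n$ with (a) and (b).

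The remaining claims are routine. If $T$ is $E$-positive then $\sp{T_ke,f}=\sp{Te,f}\ge0$ for $e,f\in E_k$, so $T_k$ is $E_k$-positive. If $T$ is trace class, so is each $T_k=P_{E_k}TP_{E_k}$ (a compression does not increase the trace norm), and the product formula follows by induction on $n$ from the standard multiplicativity $\det(1-uS)=\det(1-uS|_V)\det(1-u\ol S)$ of the Fredholm determinant ($S$ trace class, $V$ closed $S$-invariant, $\ol S$ induced on $H/V$), applied with $V=\ell^2(E_1\cup\dots\cup E_{n-1})$; entireness of the factors yields entireness of the product. The step I expect to be the main obstacle is the identity $\rho=r(T)$ — concretely, the quasinilpotence of the compact ``strictly upper triangular'' operator $R_uN$ — the rest being graph bookkeeping and standard determinant theory.
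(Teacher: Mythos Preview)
Your argument is correct and takes a genuinely different route from the paper's. The paper proceeds by naive recursive splitting: if $T$ is $E$-reducible it picks any invariant $\ell^2(F)$, factors $\det(1-uT)=\det(1-uT_F)\det(1-uT_{E\sm F})$, and recurses into whichever factor is still reducible with spectral radius $r(T)$; termination is argued in one line from the fact that $T$ has only finitely many eigenvalues of modulus $r(T)$ (with multiplicity), and these are distributed among the factors via the determinant identity. You instead identify the irreducible blocks \emph{a priori} as the strongly connected components of the digraph on $E$ with edges $e\to f$ whenever $\sp{Te,f}\ne 0$, and your main technical step is the identity $\rho:=\max_C r(T_C)=r(T)$, obtained either from Ringrose's quasinilpotence criterion applied to the strictly off-diagonal compact piece $R_uN$, or in the trace-class case from the closed-walk identity $\tr T^k=\sum_C\tr T_C^k$ combined with $\limsup_k|\tr T^k|^{1/k}=r(T)$. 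What each approach buys: the paper's is short and uses nothing beyond the determinant factorization, but the termination step is terse and non-constructive. Yours is longer and invokes outside machinery (Ringrose, or Weyl-type inequalities for the trace shortcut), but it is fully constructive---the irreducible $E_k$ are exactly the ``large'' strongly connected components---and your small-union lemma gives the uniform bound $r(T|_{\ell^2(G)})<r(T)$ for \emph{every} edge-closed union of small components at once, a fact the paper's inductive argument only yields piecemeal. Since the applications in the paper all have $T$ trace class, your trace shortcut already suffices there and the Ringrose reference could be dropped.
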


\begin{proof}
If $T$ is $E$-irreducible, we are done.
Otherwise there is a $\emptyset\ne F\subsetneq E$ such that $\ell^2(F)$ is $T$-stable.
We consider $T_F=T|_{\ell^2(F)}$ and $T_{E\sm F}=PT:\ell^2(E\sm F)\to\ell^2(E\sm F)$, where $P$ is the orthogonal projection to $\ell^2(E\sm F)$.
We have $\det(1-uT)=\det(1-uT_F)\det(1-uT_{E\sm F})$. If either of the operators $T_F$ or $T_{E\sm F}$ has spectral radius $<r(T)$ or ist irreducible, we leave this factor in peace and continue with the other, which we then decompose further.
This process will stop, as there are only finitely many spectral values $\la$ with $|\la|=r(T)$ and these are eigenvalues of finite multiplicity. By the fact that the Fredholm determinant distributes it follows that the spactral values distribute and by the finiteness, this process will stop, yielding the proposition. 
\end{proof}

\section{The Ihara Zeta function}
\begin{definition}
Let $X$ denote an oriented graph. 
This means that $X$ consists of the following data: a set $N(X)$ of \e{nodes} and a set $E(X)\subset X\times X$ of \e{oriented edges}.
We call $x$ the \e{source} of the oriented edge $e=(x,y)$ and $y$ the \e{target} and we write this as $x=s(e)$, $y=t(e)$.
The nodes $x,y$ are called the \e{endpoints} of the edge $(x,y)$.
\end{definition}

\begin{definition}
The \e{valency}, $\val(x)$ of a node $x$ is the number of edges having $x$ for one of their endpoints.
Throughout, we will assume that $X$ has \e{bounded valency}, i.e., that there exist $M>0$ such that
$$
\val(x)\le M
$$
holds for all nodes $x$.
\end{definition}

We introduce the notion of a weight.
Usually, a weight $w(e)>0$ is put on an edge representing a length or a resistance or the reciprocal of that.
In order to be more flexible and also consider paths with partial backtracking, it is more convenient for us to consider a \e{transition weight}, which may be interpreted as the likelihood of a particle or a current, of choosing a certain edge.

\begin{definition}
A \e{transition weight} on $X$, henceforth simply called a weight, is a map 
$$
w:E(X)\times E(X)\to [0,\infty)
$$
such that
$$
w(e,f)\ne 0\quad\Rightarrow\quad t(e)=s(f).
$$
and
$$
\sum_{e,f\in OE(X)}w(e,f)<\infty,
$$
\end{definition}

\begin{example}
A natural example is given in the case of $X$ being a quotient $X=\Ga\bs Y$, where $Y$ is a tree of bounded valency and $\Ga$ is a \e{tree lattice} \cite{treelat}.
In this case the weight
$$
w(e,f)=\frac1{|\Ga_f|}
$$
is a natural choice which fits the approach of Bass \cite{Bass} to the Ihara zeta function in case of ramified quotients, see also \cite{treelattice}.
\end{example}

\begin{definition}
An \e{oriented path} of \e{length} $n$ in $X$ is an $n$-tuple $p=(e_1,e_2,\dots,e_n)$ of oriented edges such that $t(e_j)=s(e_{j+1})$ holds for all $j=1,\dots,n-1$.
We write the length  as $\ell(p)=n$.
The path $p$ is called a \e{closed path}, if $t(e_n)=s(e_1)$.
For a closed path $p=(e_1,\dots,e_n)$ we define its \e{weight} as
$$
w(p)=w(e_1,e_2)w(e_2,e_3)\cdots w(e_{n-1},e_n)w(e_n,e_1).
$$
\end{definition}

\begin{definition}
(Shifting the starting point)
On the set $CP(X)$ of all closed paths we instal an equivalence relation $\sim$ generated by
$$
(e_1,e_2,\dots,e_n)\ \sim\ (e_2,e_3,\dots,e_n,e_1).
$$
An equivalence class $c=[p]$ of closed paths is called a \e{cycle}.
The length and weight functions factor through the quotient of this equivalence, so $\ell(c)$ and $w(c)$ are well-defined for a cycle $c$.
\end{definition}

\begin{definition}
For a cycle $c$ and a natural number $k$ we define $c^k$ to be the cycle one gets by iterating the cycle $c$ for $k$-times.
One has
$$
\ell(c^k)=k\ell(c),\quad\text{and}\quad w(c^k)=w(c)^k.
$$
A cycle $c$ is called \e{primitive}, if $c$ is not a power $c_1^k$ of some shorter cycle $c_1$.
For every cycle $c$ there is a uniquely determined primitive $c_0$ and a uniquely determined number $\mu(c)\in\N$ such that
$$
c=c_0^{\mu(c)}.
$$
The cycle $c_0$ is called the \e{underlying primitive} and $\mu(c)$ is called the \e{multiplicity} of the cycle $c$.
\end{definition}

\begin{definition}
The \e{Ihara zeta function} of the weighted oriented graph $(X,w)$ is defined by the  product
$$
Z(u)=\prod_{c}\(1-w(c)u^{\ell(c)}\)^{-1},
$$
where the product runs over all primitive cycles in $X$.
\end{definition}

\begin{definition}
Let $H=\ell^2(E)$ be the Hilbert space of all $\ell^2$-functions on $E(X)$ the elements of which we write as formal series $\sum_{e\in OE(X)}c_e e$ with $c_e\in\C$ satisfying $\sum_{e\in OE(X)}|c_e|^2<\infty$.
\end{definition}

\begin{definition}
Inspired by \cite{Bass}, we consider the \e{Bass operator}  $T: H\to H$ given by
$$
T(e)=\sum_{f\in OE(X)}w(e,f)f.
$$
If for two edges we have $t(e)=s(f)$, we write  $e\to f$, so we have $T(e)=\sum_{e\to f}w(e,f)f$.
\end{definition}

The following theorem is a straightforward generalization of Theorem 1.6 of \cite{weightedIhara}.

\begin{theorem}
The operator $T$ is of trace class.
The  product $Z(u)$ converges for $|u|$ sufficiently small.
The function $Z(u)$ extends to a meromorphic function, more precisely, $Z(u)^{-1}$ is entire and satisfies 
$$
Z(u)^{-1}=\det(1-uT),
$$
where $\det$ is the Fredholm-determinant.
\end{theorem}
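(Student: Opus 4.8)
The plan is to treat the three assertions in order, the hinge being a trace formula that identifies $\sum_{n\ge1}\frac{u^n}{n}\tr(T^n)$ with $\log Z(u)$.

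First I would show $T$ is trace class. Since $w(e,f)\ne0$ forces $t(e)=s(f)$, the formula $T(e)=\sum_{e\to f}w(e,f)f$ writes $T$ as the series $\sum_{e,f}w(e,f)\,\theta_{f,e}$ of rank-one operators $\theta_{f,e}(x)=\sp{x,e}f$, each of trace norm $\snorm{\theta_{f,e}}_1=1$. Since $\sum_{e,f}w(e,f)<\infty$ by the definition of a weight, this series converges absolutely in trace norm, so $T$ is trace class with $\snorm T_1\le\sum_{e,f}w(e,f)$; in particular every power $T^n$ is trace class.

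Next I would establish the trace formula. Iterating $T$ gives $\sp{T^ne,e'}=\sum w(e,e_2)w(e_2,e_3)\cdots w(e_n,e')$, the sum over all paths $(e,e_2,\dots,e_n,e')$ of length $n$; putting $e'=e$ and summing over the orthonormal basis $E$ yields
$$
\tr(T^n)=\sum_{p\,:\,\ell(p)=n}w(p),
$$
the sum over all closed paths of length $n$. As $T$, hence $T^n$, is $E$-positive, every summand $\sp{T^ne,e}$ is nonnegative, so this series and all series derived from it below have nonnegative terms and may be reordered freely. The cyclic group $\Z/n$ acts on closed paths of length $n$ by shifting the starting point, and by orbit--stabilizer the orbit of $p$ has size $\ell(p)/\mu(p)=\ell(p_0)$, the length of the underlying primitive. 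Since $w$ and $\ell$ are constant on orbits, grouping closed paths first into cycles and then into powers of primitive cycles gives
$$
\tr(T^n)=\sum_{c\,:\,\ell(c)=n}\ell(c_0)\,w(c)=\sum_{c_0\ \text{primitive}}\ \sum_{\substack{m\ge1\\ m\,\ell(c_0)=n}}\ell(c_0)\,w(c_0)^m.
$$

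Finally I would assemble the conclusion. For $|u|$ small, $\sum_n|u|^n\tr(T^n)$ converges (e.g. $\tr(T^n)\le\snorm{T^n}_1\le\snorm T_1\snorm T_{\mathrm{op}}^{\,n-1}$); since $\ell(c_0)\ge1$ and all terms are nonnegative, this dominates $\sum_{c_0\ \text{primitive}}w(c_0)|u|^{\ell(c_0)}$, whence the product $Z(u)=\prod_{c_0}(1-w(c_0)u^{\ell(c_0)})^{-1}$ converges absolutely near $0$. Taking logarithms and reordering,
$$
\log Z(u)=\sum_{c_0}\sum_{m\ge1}\tfrac1m\,w(c_0)^m u^{m\ell(c_0)}=\sum_{n\ge1}\frac{u^n}{n}\sum_{\substack{c_0,\,m\\ m\ell(c_0)=n}}\ell(c_0)\,w(c_0)^m=\sum_{n\ge1}\frac{u^n}{n}\tr(T^n),
$$
while the standard expansion of the Fredholm determinant of a trace class operator gives $\det(1-uT)=\exp\!\bigl(-\sum_{n\ge1}\tfrac{u^n}{n}\tr(T^n)\bigr)$ for $|u|$ small. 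Comparing, $Z(u)^{-1}=\det(1-uT)$ on a neighbourhood of $0$; since the right-hand side is entire, $Z(u)^{-1}$ continues to an entire function and $Z(u)$ is meromorphic. I expect the real content to be the combinatorial bookkeeping in the trace formula --- the orbit count turning a sum over closed paths into one over primitive cycles --- together with the justification, via the $E$-positivity of $T$, that every double series in sight may be summed in any order.
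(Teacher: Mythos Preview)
Your proof is correct and follows essentially the same route as the paper: establish that $T$ is trace class, compute $\tr(T^n)$ as a weighted count of closed paths regrouped via the cyclic action into $\sum_{\ell(c)=n}\ell(c_0)w(c)$, and then identify $\exp\bigl(-\sum_n\tfrac{u^n}{n}\tr T^n\bigr)=\det(1-uT)$ with $Z(u)^{-1}$. The only cosmetic difference is your trace-class argument via the rank-one decomposition $T=\sum_{e,f}w(e,f)\theta_{f,e}$, which is in fact cleaner than the paper's estimate of $\sum_e\|Te\|$.
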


\begin{proof}
The proof is essentially the same as the proof of Theorem 1.6 of \cite{weightedIhara}.
We repeat it here for the convenience of the reader.
We show that the operator $T$ is of trace class, and for every $n\in\N$ we have
$$
\tr T^n= \sum_{l(c)=n} l(c_0)\,w(c),
$$
where the sum runs over all  cycles $c$ of length $n$ and $c_0$ is the underlying primitive cycle to $c$.

For this we consider the natural orthonormal basis of $\ell^2(E)$  given by $E$.
Using this orthonormal basis, one easily sees that $T^n$ has the claimed trace, once we know that $T$ is of trace class.
For this we estimate
\begin{align*}
\sum_{e\in E}\norm{Te}&=\sum_e\(\sp{Te,Te}\)^{\frac12}
=\sum_e\(\sum_{f\in E}\sp{Te,f}\sp{f,Te}\)^{\frac12}\\
&\le \sum_e\sum_f|\sp{Te,f}|^2=\sum_{e,f}w(e,f)^2<\infty.
\end{align*}
This implies that $T$ is of trace class.
For small values of $u$ we have
\begin{align*}
\det(1-uT) &=\exp\left( -\sum_{n=1}^\infty\frac{u^n}n\tr T^n\right)
=\exp\left( -\sum_{n=1}^\infty\frac{u^n}n\sum_{l(c)=n}l(c_0)\,w(c)\right)\\
&=\exp\left( -\sum_{c_0}\sum_{m=1}^\infty \frac{u^{ml(c_0)}}m \,w(c_0)^m\right)
= \prod_{c_0}\left( 1-w(c_0)u^{l(c_0)}\right)= Z(u)^{-1}.
\end{align*} 
This a fortiori also proves the convergence of the product.
\end{proof}

\section{The prime geodesic theorem for a weighted graph}

\begin{theorem}\label{thm3.1}
Let $(X,w)$ be a weighted oriented graph.
For $m\in\N$ let
$$
N_m=\sum_{c:l(c)=m}w(c)\,l(c_0).
$$
Then there are $r>0$ and  natural numbers $n_1,\dots, n_s$ such that, as $m\to\infty$,
$$
N_m=r^m\sum_{k=1}^sn_k{\bf 1}_{n_k\N}(m)+O((r-\eps)^m)
$$
for some $\eps>0$.
\end{theorem}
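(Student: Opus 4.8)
The plan is to read off $N_m$ from the spectral data of the Bass operator $T$ and then apply the structural results of Section~\ref{sec1}. First I would recall from the previous theorem that $N_m = \tr T^m = \sum_{\la\in\sigma(T)} a(\la)\,\la^m$, where $a(\la)$ is the algebraic multiplicity of the eigenvalue $\la$; this is legitimate because $T$ is trace class, so the spectrum consists of eigenvalues of finite algebraic multiplicity accumulating only at $0$, and the trace of $T^m$ is the sum of the $m$-th powers of the eigenvalues (with algebraic multiplicity), with the series absolutely convergent for $m\ge 1$. Since $T$ is $E$-positive (all $w(e,f)\ge 0$) and trace class, it is a positive compact operator; if $r=r(T)>0$ we are in business, and if $r(T)=0$ then $T$ is quasinilpotent, all $N_m$ eventually vanish, and the statement holds trivially with $r$ chosen arbitrarily and $s=0$ (or one massages the constants). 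So assume $r(T)>0$.

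Next I would invoke Proposition~\ref{prop1.5} to decompose $E = E_1\cup\dots\cup E_n$ so that $\det(1-uT)=\prod_{k}\det(1-uT_k)$ with each $T_k$ either $E_k$-irreducible (hence, being $E_k$-positive and trace class, subject to Theorem~1.4) or of spectral radius strictly below $r(T)$. The eigenvalues of $T$ on the circle $|\la|=r(T)$ are exactly those contributed by the irreducible blocks $T_k$ with $r(T_k)=r(T)$. For each such block, Theorem~1.4 tells us the eigenvalues of modulus $r(T_k)=r$ are precisely $r e^{2\pi i j/n_k}$ for $j=0,\dots,n_k-1$, each of algebraic multiplicity $1$, where $n_k$ is the period of that block. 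Hence the contribution of such a block to $\tr T^m$ is $r^m \sum_{j=0}^{n_k-1} e^{2\pi i j m/n_k} = r^m n_k \mathbf{1}_{n_k\N}(m)$, using the standard fact that the sum of all $n_k$-th roots of unity to the $m$-th power is $n_k$ if $n_k\mid m$ and $0$ otherwise.

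Summing over the finitely many leading irreducible blocks $k=1,\dots,s$ gives the main term $r^m\sum_{k=1}^s n_k\mathbf{1}_{n_k\N}(m)$. Everything else — the non-leading blocks, and within each leading block the eigenvalues of modulus strictly less than $r$ — contributes eigenvalues $\la$ with $|\la|\le \rho$ for some $\rho<r$; choose $\eps>0$ with $r-\eps>\rho$. The tail estimate is then $\bigl|\sum_{|\la|<r} a(\la)\la^m\bigr| \le \rho^m \sum_{|\la|<r} a(\la)|\la/\rho|^m$, and since $\sum_\la a(\la)|\la|$ converges (trace class), for $m\ge 1$ this is bounded by $C\rho^m = O((r-\eps)^m)$; more carefully one splits the finitely many eigenvalues with $\rho' < |\la| < r$ for an intermediate $\rho'$ from the rest, but either way the bound $O((r-\eps)^m)$ is immediate once $\rho<r-\eps$.

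The main obstacle is purely bookkeeping rather than conceptual: one must make sure the two reduction steps interact correctly, i.e. that Proposition~\ref{prop1.5} really isolates \emph{all} eigenvalues on the critical circle into irreducible blocks of full spectral radius, and that Theorem~1.4 applied to those blocks genuinely pins down the critical eigenvalues as evenly distributed roots of unity times $r$ with multiplicity one. A minor subtlety is the degenerate case $r(T)=0$ and the normalization of the statement when $s=0$; and one should note that the $n_k$ need not be distinct, so the sum $\sum_{k=1}^s n_k\mathbf{1}_{n_k\N}(m)$ is over blocks, not over distinct periods — which is exactly what the statement allows.
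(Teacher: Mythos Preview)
Your proposal is correct and follows essentially the same route as the paper. The paper reaches $N_m=\sum_\la m(\la)\la^m$ via the logarithmic derivative of $Z(u)=\det(1-uT)^{-1}$ while you invoke $N_m=\tr T^m$ directly from the preceding theorem, but from that point on both arguments coincide: decompose via Proposition~\ref{prop1.5}, apply the Perron--Frobenius type Theorem~1.4 to each leading irreducible block to obtain the evenly spaced simple eigenvalues $re^{2\pi ij/n_k}$, sum the resulting roots of unity to produce $n_k\mathbf 1_{n_k\N}(m)$, and absorb the subcritical eigenvalues into the $O((r-\eps)^m)$ term.
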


\begin{proof}
Let $Z(u)$ the Ihara zeta function of $X$.
By the definition of $Z$ we get
$$
u\frac{Z'}Z(u)=\sum_{m=1}^\infty N_mu^m.
$$
The formula $Z(u)=\det(1-uT)^{-1}$  on the other hand yields
$$
N_m=r^m\sum_{k=1}^s\sum_{j=0}^{n_k-1}e^{\frac{2\pi ijm}{n_k}}+\sum_{|\la|<r} m(\la)\la^m,
$$
where $r=r(T)>0$ is the spectral radius of $T$ and the last sum runs over all eigenvalues $\la$ of $T$ with $|\la|<r$. The number $m(\la)$ is the algebraic multiplicity of $\la$. 
Finally, the numbers $s$ is the number of components as in Proposition \ref{prop1.5} with $r(T_j)=r(T)$ and $n_j$ is the number of eigenvalues $\la$ of that component, which satisfy $|\la|=r(T)$.
Since the sum $\sum_{j=0}^{n_k-1}e^{\frac{2\pi ijm}{n_k}}$ is zero unless $m$ is a multiple of $n_k$, the theorem follows.
\end{proof}

Let 
\begin{align*}
\vartheta(n)&=\sum_{c_0:l(c_0)\le n}w(c_0)l(c_0),\\
\psi(n)&=\sum_{c:l(c)\le n}w(c)l(c_0),\\
\pi(n)&=\sum_{c_0:l(c_0)\le n}w(c_0).
\end{align*}

\begin{proposition}
Assume that $r=r(T)>1$.
Let $n_1,n_2,\dots,n_s$ the numbers of Theorem \ref{thm3.1}, let $K$ be their least common multiple and let $C=\sum_{k=1}^s\frac{n_kr^{n_k}}{r^{n_k}-1}$. 
Then, as $n\to\infty$ one has
$$
\vartheta(nK)\sim\psi(nK)\sim r^{nK}C
$$
and
$$
\pi(nK)\sim\frac{r^{nK}}{nK}C.
$$
\end{proposition}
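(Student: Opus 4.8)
The plan is to read off all three asymptotics from the explicit formula for $N_m$ in Theorem~\ref{thm3.1} combined with elementary summations, exactly as in the classical passage between $\psi$, $\vartheta$ and $\pi$. Since $\psi(N)=\sum_{m=1}^{N}N_m$, the first step is to substitute that formula with $N=nK$: as $K$ is the least common multiple of the $n_k$, every $n_k$ divides $nK$, so the multiples of $n_k$ in $[1,nK]$ are $n_k,2n_k,\dots,(nK/n_k)\,n_k$ and
\[
\sum_{\substack{1\le m\le nK\\ n_k\mid m}} r^m \;=\; \sum_{i=1}^{nK/n_k} r^{in_k} \;=\; \frac{r^{n_k}\bigl(r^{nK}-1\bigr)}{r^{n_k}-1}\,.
\]
Multiplying by $n_k$ and summing over $k$ turns the main term of $N_m$ into $r^{nK}C-C$, while the error terms give $\sum_{m\le nK}O\bigl((r-\eps)^m\bigr)=O\bigl((r-\eps)^{nK}\bigr)$, which is $o(r^{nK})$ once $\eps$ is shrunk so that $r-\eps>1$ (possible because $r>1$). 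Thus $\psi(nK)=r^{nK}C+o(r^{nK})$; in particular, summing the bound $N_m=O(r^m)$ geometrically gives $\vartheta(N)\le\psi(N)=O(r^N)$ for every $N$.

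Next I would compare $\vartheta$ with $\psi$. Writing each cycle uniquely as $c=c_0^{\mu}$ with $c_0$ primitive, so that $\ell(c)=\mu\ell(c_0)$ and $w(c)=w(c_0)^{\mu}$, one gets
\[
\psi(N)-\vartheta(N)\;=\;\sum_{\mu\ge 2}\ \sum_{\substack{c_0\ \mathrm{primitive}\\ \mu\ell(c_0)\le N}}\ell(c_0)\,w(c_0)^{\mu}\,.
\]
Here $\mu$ ranges only over $2,\dots,N$, and for fixed $\mu$ the inner sum is bounded, using $w(c_0)^{\mu}\le w(c_0)$, by $\vartheta(N/\mu)\le\vartheta(N/2)=O(r^{N/2})$. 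Hence $\psi(N)-\vartheta(N)=O\bigl(N\,r^{N/2}\bigr)=o(r^N)$, and with $N=nK$ this gives $\vartheta(nK)=\psi(nK)+o(r^{nK})\sim r^{nK}C$.

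Finally, $\pi$ follows from $\vartheta$ by partial summation. Put $a_k=\sum_{\ell(c_0)=k}w(c_0)$, so $\pi(N)=\sum_{k\le N}a_k$ and $\vartheta(N)=\sum_{k\le N}k\,a_k$. Then
\[
\pi(N)\;=\;\frac{\vartheta(N)}{N}\;+\;\sum_{k=1}^{N-1}\frac{\vartheta(k)}{k(k+1)}\,,
\]
and since $\vartheta(k)=O(r^k)$ from the first step the last sum is $O\bigl(r^{N}/N^2\bigr)=o(r^N/N)$, so $\pi(N)=\vartheta(N)/N+o(r^N/N)$. Taking $N=nK$ and inserting the asymptotics for $\vartheta(nK)$ yields $\pi(nK)\sim\frac{r^{nK}}{nK}\,C$.

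The summations in the first and third steps are routine. The only point that requires care is the second step — the estimate for the contribution of imprimitive cycles — since this is where one must know that the cycle weights $w(c_0)$ stay bounded (so that $w(c_0)^{\mu}\le w(c_0)$ for $\mu\ge 2$) and where the a~priori bound $N_m=O(r^m)$ coming from Theorem~\ref{thm3.1}, rather than merely its leading term, is used; I expect this to be the main obstacle.
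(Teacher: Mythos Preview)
Your computation of $\psi(nK)$ in the first step matches the paper's, but from there the two arguments diverge. The paper does not estimate $\psi-\vartheta$ directly and does not use Abel summation; instead it runs the classical Chebyshev sandwich. From $\vartheta(n)\le\psi(n)\le n\pi(n)$ and the lower bound $\vartheta(n)\ge\alpha n\bigl(\pi(n)-\pi(\alpha n)\bigr)$ (valid for any $0<\alpha<1$) one squeezes the $\liminf$'s and $\limsup$'s of $\vartheta(nK)/r^{nK}$, $\psi(nK)/r^{nK}$ and $nK\,\pi(nK)/r^{nK}$ together, and then reads off their common value $C$ from the explicit $\psi$-computation. Your route via a direct bound on $\psi-\vartheta$ followed by partial summation is a legitimate alternative and arguably more transparent, since it isolates exactly which piece of the count comes from imprimitive cycles.

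The genuine gap is in your second step, and you have almost put your finger on it. The inequality $w(c_0)^{\mu}\le w(c_0)$ for $\mu\ge 2$ is equivalent to $w(c_0)\le 1$, not to mere boundedness; nothing in the definition of a transition weight forces primitive-cycle weights to lie in $[0,1]$ (indeed Proposition~3.3(b) explicitly contemplates $w(c_0)\ge 1$). So the estimate $\sum_{c_0:\,\mu\ell(c_0)\le N}\ell(c_0)\,w(c_0)^{\mu}\le\vartheta(N/\mu)$ is not justified as stated. It is worth noting that the paper's own chain $\psi(n)\le n\pi(n)$ is subject to the very same objection (the step $\sum_{j\le n/\ell(c_0)}w(c_0)^{j}\ell(c_0)\le n\,w(c_0)$ also needs $w(c_0)\le 1$), so both arguments tacitly rely on this hypothesis. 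Your instinct that step~2 is where the difficulty lies is correct; the condition you name for it, however, is not strong enough.
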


\begin{proof}
Let 
\begin{align*}
l_1&=\liminf_n\frac{\vartheta(nK)}{r^{nK}},&L_1&=\limsup_n\frac{\vartheta(nK)}{r^{nK}},\\
l_2&=\liminf_n\frac{\psi(nK)}{r^{nK}},&L_1&=\limsup_n\frac{\psi(nK)}{r^{nK}},\\
l_3&=\liminf_n\frac{nK\pi(nK)}{r^{nK}},&L_1&=\limsup_n\frac{nK\pi(nK)}{r^{nK}},\\
\end{align*}
We compute
\begin{align*}
\vartheta(n)&\le\psi(n)=\sum_{c_0:l(c_0)\le n}\sum_{j=1}^{\left[\frac n{l(c_0)}\right]}w(c_0)^jl(c_0)\\
&\le n\sum_{c_0:l(c_0)\le n}w(c_0)=n\pi(n)
\end{align*}
and this implies $l_1\le l_2\le l_3$ as well as $L_1\le L_2\le L_3$.
Next we let $0<\al<1$ and we get
\begin{align*}
\vartheta(n)&\ge \sum_{\al n< l(c_0)\le n} w(c_0)l(c_0)\ge \al n\sum_{\al n< l(c_0)\le n}w(c_0)\\
&=\al n(\pi(n)-\pi(\al n)).
\end{align*}
So that
\begin{align*}
\frac{\vartheta(n)}{r^n}\ge \al\frac{n\pi(n)}{r^n}-\frac{\al n\pi(\al n)}{r^{\al n}} r^{(\al-1)n}.
\end{align*}
Since $r^{(\al-1)n}$ tends to zero, this implies and this finally implies $\al l_3\le l_1$ and $\al L_3\le L_1$.
As $\al$ was arbitrary it follows $l_3\le l_1$ and $L_3\le L_1$, so $l_1=l_2=l_3$ and $L_1=L_2=L_3$.
The claim follows if we finally show that $l_2=L_2=C$.
We have
\begin{align*}
\psi(nK)&=\sum_{m=1}^{nK}N_m=\sum_{m=1}^{nK} r^m\sum_{k=1}^sn_k{\bf 1}_{n_k\N}(m)+O((r-\eps)^m)\\
&=\sum_{m=1}^{nK}r^m\sum_{k:n_k|m}n_k+O((r-\eps)^m)\\
&=\sum_{k=1}^s n_k \sum_{\substack{m\le nK\\ n_k|m}}r^m+O((r-\eps)^m)\\
&=\sum_{k=1}^sn_k\sum_{1\le j\le \frac {nK}{n_k}}r^{jn_k}+O((r-\eps)^{jn_k})\\
&=\sum_{k=1}^sn_kr^{n_k}\frac{r^{nK}-1}{r^{n_k}-1}+O((r-\eps)^{nK})
\end{align*}
and so
\begin{align*}
\frac{\psi(nK)}{r^{nK}}&=\sum_{k=1}^sn_kr^{n_k}\frac{1-r^{-nK}}{r^{n_k}-1}+O((r-\eps)^{nK})\\
&=\sum_{k=1}^sn_k\frac{r^{n_k}}{r^{n_k}-1}+o(1)
\mqed
\end{align*}
\end{proof}

\begin{proposition}
\begin{enumerate}[\rm (a)]
\item For the spectral radius $r=r(T)$ one has
$$
r\ =\ \frac1{\sup\{ u>0:\sum_{c}w(c)u^{l(c)}<\infty\}}.
$$
\item
If there exist two different primitive cycles $c_0, d_0$ with a common oriented edge $e$ and $w(c_0)=w(d_0)\ge1$, then $r>1$.
\end{enumerate}
\end{proposition}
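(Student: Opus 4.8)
The idea is to recognise $\sum_c w(c)u^{\ell(c)}=\sum_{m\ge 1}a_mu^m$ with $a_m=\sum_{c:\ell(c)=m}w(c)\ge 0$, so that the supremum in (a) is exactly the radius of convergence $R$ of this power series, and it suffices to prove $R=1/r$. Since a cycle $c$ of length $m$ satisfies $1\le\ell(c_0)\le m$, the numbers $N_m$ of Theorem~\ref{thm3.1} obey $a_m\le N_m\le m\,a_m$; as $m^{1/m}\to 1$ this gives $\limsup_m a_m^{1/m}=\limsup_m N_m^{1/m}$, so I only need the radius of convergence of $\sum_m N_mu^m=u\,Z'/Z(u)$. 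Here I would use that $Z(u)^{-1}=\det(1-uT)$ is entire, so that $Z$ is meromorphic, nowhere zero, with poles exactly at the reciprocals of the nonzero eigenvalues of $T$; the pole nearest the origin sits at modulus $1/r$, and $u=1/r$ genuinely is a pole because $r$ is an eigenvalue of $T$ (Krein--Rutman, as in the first theorem of this paper). Hence $u\,Z'/Z(u)$ is holomorphic on $\{|u|<1/r\}$ and singular at $1/r$, so $\limsup_m N_m^{1/m}=r$, whence $\limsup_m a_m^{1/m}=r$ and $R=1/r$ by Cauchy--Hadamard.

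\textbf{Part (b), the construction.} By (a) it is enough to force $\limsup_m N_m^{1/m}>1$. Put $p=\ell(c_0)$, $q=\ell(d_0)$ and $\rho=w(c_0)=w(d_0)\ge 1$, and choose closed-path representatives $P$ of $c_0$ and $Q$ of $d_0$ that both begin with the shared oriented edge $e$. Each of $P,Q$ ends with an edge whose target is $s(e)$, so for every $k$ and every $\vec\varepsilon=(\varepsilon_1,\dots,\varepsilon_k)\in\{P,Q\}^k$ the concatenation $\varepsilon_1\varepsilon_2\cdots\varepsilon_k$ is again a closed path; it has first edge $e$, length $\sum_i\ell(\varepsilon_i)$, and — the transition weights at the $k$ junctions telescoping with the within-block weights exactly as they do inside a single $w(c_0)$ — weight $\rho^k\ge 1$. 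Since $T$ is $E$-positive, $N_m=\tr T^m=\sum_f\sp{T^mf,f}\ge\sp{T^me,e}$, and $\sp{T^me,e}$ equals the sum of $w(\cdot)$ over all closed paths of length $m$ whose first edge is $e$; hence $N_m\ge\sum_{\vec\varepsilon:\,\sum_i\ell(\varepsilon_i)=m}\rho^k$, provided distinct $\vec\varepsilon$ give distinct concatenations.

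\textbf{Part (b), distinctness, conclusion, and the main obstacle.} The distinctness is the one delicate point, and it is where primitivity is used. I would show $\{P,Q\}$ is a code in the free monoid on the edge set: by the classical two-word criterion (two words commute iff they are powers of a common word), non-codeness would produce an edge-word $z$ with $P=z^a$, $Q=z^b$ for integers $a,b\ge 1$; such a $z$ is itself a closed path, and since $P\ne Q$ (they represent the distinct cycles $c_0,d_0$) we have $a\ne b$, so $\max(a,b)\ge 2$ and one of $c_0=[z]^a$, $d_0=[z]^b$ exhibits that cycle as a proper power of the shorter cycle $[z]$, contradicting primitivity. Thus distinct $\vec\varepsilon$ give distinct closed paths, so $N_m\ge F_m$, where $F_m$ is the number of finite words over $\{P,Q\}$ of total edge-length $m$, i.e.\ the number of ways to write $m$ as an ordered sum of $p$'s and $q$'s; since $\sum_m F_mu^m=(1-u^p-u^q)^{-1}$, this has radius of convergence equal to the unique positive root $u_0\in(0,1)$ of $u^p+u^q=1$, so $\limsup_m N_m^{1/m}\ge 1/u_0>1$, i.e.\ $r>1$. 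The main obstacle is exactly this unique-decodability step — ruling out that two different block-words collapse to the same closed path — and it is precisely there that one uses both that $c_0,d_0$ are primitive and that they are distinct; the rest is bookkeeping with weights and the Cauchy--Hadamard/Pringsheim estimates.
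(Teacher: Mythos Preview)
Your part (a) is essentially the paper's argument: both compare $\sum_c w(c)u^{\ell(c)}$ with $uZ'/Z(u)=\sum_m N_m u^m$ via the sandwich $1\le\ell(c_0)\le\ell(c)$ and identify the radius of convergence as $1/r$ from the pole structure of $Z$; you simply spell out the Krein--Rutman/Pringsheim step a bit more explicitly.

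For part (b) you take a genuinely different route. The paper first replaces $c_0,d_0$ by the concatenations $c_0d_0$ and $d_0c_0$ to force equal lengths $l$, and then compares $\langle T^{nl}e,e\rangle$ with the Bass operator of an explicit four-vertex ``bowtie'' graph to obtain $r\ge 2^{1/l}$. You instead keep the original lengths $p,q$, prove directly that $\{P,Q\}$ is a code in the free monoid on edges (via the two-word criterion that commuting words are powers of a common word, which primitivity of $c_0,d_0$ rules out), and read off the growth from the generating function $(1-u^p-u^q)^{-1}$. Your argument is more self-contained and gives the sharper explicit bound $r\ge 1/u_0$ with $u_0^p+u_0^q=1$; it also makes fully explicit the unique-decodability step that the paper's comparison uses only implicitly (the paper's count of paths ``of the form $c_0^{n_1}d_0^{m_1}\cdots$'' and its assertion that $c_0d_0$ and $d_0c_0$ are distinct both tacitly rely on $PQ\ne QP$, which is exactly the code property you isolate). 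The price you pay is invoking the Lyndon--Sch\"utzenberger lemma, whereas the paper's reduction trades that for an ad hoc graph model.
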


\begin{proof}
(a) We have 
$$
u\frac{Z'}Z(u)=\sum_{c}w(c)l(c_0)u^{l(c)}.
$$
Hence it follows that $\frac1r$ equals the supremum of all $u>0$ for which $\sum_{c}w(c)l(c_0)u^{l(c)}<\infty$.
Let $l=\sup\{ u>0:\sum_{c}w(c)u^{l(c)}<\infty\}$.
We show that $l=\frac1r$.
So suppose that $\sum_{c}w(c)u^{l(c)}<\infty$.
Then, as power series may be differentiated element-wise and on the other hand they converge locally uniformly and may be integrated, it follows that
$$
\sup\{ u>0:\sum_{c}w(c)u^{l(c)}<\infty\}
=\sup\{ u>0:\sum_{c}w(c)l(c)u^{l(c)}<\infty\}.
$$
As $1\le l(c_0)\le l(c)$, the claim follows.

(b) 
The cycles $c_0d_0$ and $d_0c_0$ are primitive, distinct, and of the same length.
Replacing $c_0$, $d_0$ with these, we assume that $l(c_0)=l(d_0)=l$.
Next we fix representing closed paths of $c_0$ and $d_0$ which start with the edge $e$.
For $n\in\N$ we have
$$
\sp{T^{nl}e,e}\ge\sum_{p:l(p)=nl}1,
$$
where the sum runs over all closed paths starting with $e$ and being of the form $c_0^{n_1}d_0^{m_1}\cdots c_0^{n_s}d_0^{m_s}$ for some $m_jn_j\in\N_0$.
This implies that
$$
\sp{T^{nl}e,e}\ge\sp{S^{3n}f,f},
$$
where $S$ is the Bass operator of the oriented graph with constant weight $w=1$
\begin{center}
\begin{tikzpicture}
\draw(0,0)node{$\bullet$};
\draw(0,2)node{$\bullet$};
\draw(-2,1)node{$\bullet$};
\draw(2,1)node{$\bullet$};
\draw[->,very thick](0,0)--(0,1);
\draw[very thick](0,0)--(0,2);
\draw[very thick](0,2)--(2,1);
\draw[->,very thick](0,2)--(1,1.5);
\draw[very thick](2,1)--(0,0);
\draw[->,very thick](2,1)--(1,.5);
\draw[very thick](0,0)--(2,1);
\draw[->,very thick](0,2)--(-1,1.5);
\draw[very thick](0,2)--(-2,1);
\draw[->,very thick](-2,1)--(-1,.5);
\draw[very thick](0,0)--(-2,1);
\draw(.3,1)node{$f$};
\end{tikzpicture}
\end{center}
One sees that $S^{3n}f=2^nf$
Therefore $\sp{T^{nl}e,e}\ge 2^n$ and so
the operator norm satisfies $\norm T^{nl}\ge 2^n$.
The spectral radius therefore satisfies
\begin{align*}
r=\lim_n\norm{T^{nl}}^\frac1{nl}\ge 2^{\frac1l}>1.
\mqed
\end{align*}\end{proof}

\section{Affine buildings}
For background on this section, the reader may consult 
\cite{bldglat}.
Let $X$ be a locally finite affine building.
By this we understand a polysimplicial complex which is the union of a given family of affine Coxeter complexes, called apartments, such that any two chambers (=cells of maximal dimension, which is fixed) are contained in a common apartment and for any two apartments $\a,\b$ containing chambers $C,D$ there is a unique isomorphism $\a\to\b$ fixing $C$ and $D$ point-wise.
A chamber is called \e{thin} if at every wall it has a unique neighbor chamber, it is called \e{thick}, if at each wall it neighbors at least two other chambers. 
The building is called thin or thick if all its chambers are.
For the ease of presentation, we will always assume that the building $X$ is  simplicial instead of polysimplicial.

Note that our definition includes buildings which are not Bruhat-Tits.
In higher dimensions, buildings tend to be of Bruhat-Tits type \cite{BruhatTits}.
For buildings of dimension at most two the situation is drastically different.
Indeed, 
Ballmann and Brin proved that every 2-dimensional simplicial complex in which the links of vertices are isomorphic to the flag complex of a finite projective plane has the structure of a building \cite{BallmannBrin}.

When speaking of ``points'' in $X$, we identify the complex $X$ with its geometric realization.
Note that the latter carries a topology as a CW-complex.
In this topology, a set is compact if and only if it is closed and contained in a finite union of chambers.
Note that an affine building is always contractible, see Section 14.4 of \cite{Garrett}.

\begin{definition}
Generally, there are different families of apartments which make $X$ a building, but there is a unique maximal family (Theorem 4.54 of \cite{Bldgs}).
In this paper, we will always choose the maximal family.
Let $\Aut(X)$ be the automorphism group of the building $X$, that is, the set of all automorphisms $g:X\to X$ of the complex $X$ which map apartments to apartments.
In the geometric realisation these are cellular maps which are affine on each cell. 
\end{definition}

\begin{definition}\label{def2.4.2}
A choice of \e{types} is a labelling that attaches to each vertex $v$ a label $\lab(v)$, or type in $\{0,1,\dots,d\}$ such that for each chamber $C$ the set $V(C)$ of vertices of $C$ is mapped bijectively to $\{0,1,\dots,d\}$.

Restricting the labelling gives a bijection between the set of all choices of types and the set of all bijections $V(C_0)\tto\cong\{0,1,\dots,d\}$, where $C_0$ is any given chamber.
Therefore the number of different choices of types is $(d+1)!$. 
We fix a choice of types such that each vertex of type zero is a special vertex.
This means that the set of reflection hyperplanes containing  it, meets every parallelity class of reflection hyperplanes of the ambient apartment, see Definition 1.2.3 of \cite{bldglat}.
\end{definition}

\begin{definition}
Pick a chamber $C$ and an apartment $\a$ containing $C$.
Let $v_0,v_1,\dots,v_d$ be the vertices of $C$ with $\lab(v_j)=j$.
Pick $v_0$ as origin to give the affine space $\a$ the structure of a vector space.
Let $\Cone(C,\a)$ denote the open cone in $\a$ spanned by the interior $\mathring C$ of the chamber $C$, i.e.
$$
\Cone(C,\a)=(0,\infty)\cdot\mathring C.
$$
Further let
$$
\Cone(C)=\bigcup_{\a\supset C}\Cone(C,\a).
$$
For two chambers $C,D$ we finally write 
$$
C\leadsto D
$$
if and only if
$$
C\ne D\quad\text{and}\quad \Cone(D)\subset\Cone(C).
$$
\end{definition}

\begin{definition}
Let $v_1,\dots,v_d$ be the other vertices of $C$ and let 
$$
\La=\bigoplus_{j=1}^d\Z e_j.
$$
where $e_j=r_jv_j$ and $r_j>0$ for each $j=1,\dots,d$ is the largest rational  number such that
$
\La_0\subset\La,
$
where $\La_0$ is the lattice of vertices of type zero.
\end{definition}

Let 
$$
\N^d(\La_0)
$$ 
denote the set of all 
$k=(k_1,\dots,k_d)\in \N^d$ such that $\sum_{j=1}^d k_je_j$ lies in $ \La_0$.
Analogously define
$$
\N_0^d(\La_0).
$$
For given $k\in\N_0^d$ the element $\sum_{j=1}^dk_je_j$ is contained in a unique chamber $C(k)\subset \Cone(C)$ such that $\Cone(C(k))\subset\Cone(C)$ as in the picture.
\begin{center}
\begin{tikzpicture}
\draw(0,0)--(5,5);
\draw(0,0)--(0,5);
\draw(0,1)--(1,1);
\draw(2,4)--(2,5)--(3,5)--(2,4);
\draw(0,-.3)node{$v_0$};
\draw(.7,.2)node{$C$};
\draw(3,4.2)node{$C(k)$};
\draw[dotted](2,4)--(2,2);
\draw[dotted](2,4)--(0,2);
\draw(2.2,1.7)node{$k_1$};
\draw(-.3,1.8)node{$k_2$};
\end{tikzpicture}
\end{center}
We say that the chamber $C(k)$ is \e{in relative position $k$} to $C$ and we write this as
$$
C\leadsto_k C(k).
$$

\section{Discrete groups}

The automorphism group $G=\Aut(X)$ of the building carries a natural topology, the compact-open topology. 
It is a locally-compact group which is totally disconnected.
A subgroup $\Ga\subset G$ is discrete if and only if for each chamber $C$ the stabilizer group
$$
\Ga_C=\big\{ \ga\in \Ga: \ga C=C\big\}
$$
is finite.
A discrete group $\Ga$ is a \e{lattice} in $G$, i.e., there exists a finite $G$-invariant Radon measure on $G/\Ga$ if and only if
$$
\sum_{C\in\CC}\frac1{|\Ga_C|}<\infty.
$$

We fix a discrete group $\Ga\subset\Aut(X)$.
The subgroup $\Ga^\mathrm{lab}$ of all $\ga\in\Ga$ which preserve a given labelling, is normal and of finite index in $\Ga$.
Replacing $\Ga$ by $\Ga^\mathrm{lab}$ we will henceforth \e{assume} that $\Ga$ preserves labellings.

\begin{definition}
Fix a discrete, label preserving subgroup $\Ga\subset G$.
A \e{$\Ga$-weight} is a map
$$
w:\CC\times\CC\to [0,\infty),
$$
such that
\begin{itemize}
\item $w(\ga C,D)=w(C,\ga D)=w(C,D)$ for all $C,D\in\CC$ and all $\ga\in\Ga$,
\item $\ds w(C,D)\ne 0\quad\Rightarrow\quad C\leadsto \ga D$ for some $\ga\in\Ga$,
\item $\ds\sum_{C,D\in\Ga\bs\CC}w(C,D)<\infty$,
\item $w(C,D)w(D,E)=w(C,E)$ holds for all $C,D,E\in\CC$ which satisfy $C\leadsto D\leadsto E$.
\end{itemize}
\end{definition}

\begin{definition}
Let $w$ be a $\Ga$-weight.
For $k\in\N_0^d(\La_0)$ we define an operator
$$
T_k:\ell^\infty(\CC)\to \ell^\infty(\CC)
$$
by
$$
T_k(C)=\sum_{C'} w(C,C')C',
$$
where the sum runs over all chambers $C'$ in relative position $k$ to $C$.
Note that for given $k\in\N_0^d$ in each apartment $\a$ containing $C$ there is at most one $C'$ in position $k$, but the same $C'$ can lie in infinitely many apartments containing $C$.
As we assume the building to be locally finite, the sum defining $T_k$ is actually finite.
\end{definition}

\begin{lemma}
For $k,l\in\N_0^d(\La_0)$ we have
$$
T_kT_l=T_{k+l}.
$$
In particular, the operators $T_k$ and $T_l$ commute.
\end{lemma}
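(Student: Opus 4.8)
The plan is to show that the composition $T_k T_l$, when applied to a basis chamber $C$, runs over the same chambers as $T_{k+l}$, with matching weights. First I would fix a chamber $C$ and compute $T_k T_l(C) = \sum_{C'} w(C,C') T_l(C') = \sum_{C'}\sum_{C''} w(C,C')w(C',C'')\,C''$, where $C'$ ranges over chambers in relative position $k$ to $C$ and $C''$ over chambers in relative position $l$ to $C'$. The goal is to reorganize this double sum as a single sum over chambers $C''$ in relative position $k+l$ to $C$, with coefficient $w(C,C'')$.

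The combinatorial heart of the argument is the claim that the map $(C',C'')\mapsto C''$ is a bijection between the pairs occurring above and the chambers $C''$ in relative position $k+l$ to $C$. For surjectivity: given $C''$ with $C\leadsto_{k+l}C''$, work inside an apartment $\a$ containing both $C$ and $C''$ (such an apartment exists because $\Cone(C'')\subset\Cone(C)$ forces them into a common apartment, using the building axiom). In $\a$, the vertex $\sum_j(k+l)_j e_j$ lies in a unique chamber, and setting $C'=C(k)$ computed relative to $C$ inside $\a$, one checks that $\sum_j l_j e_j$ translated to the origin at $C'$ lands in $C''$, so $C\leadsto_k C'\leadsto_l C''$; the additivity $\Cone(C'')\subset\Cone(C')\subset\Cone(C)$ follows from the nesting of cones along a straight-line path in the apartment. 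For injectivity: if $C\leadsto_k C'\leadsto_l C''$, then $C'$ is forced to be the unique chamber containing $\sum_j k_j e_j$ in any apartment through $C$ and $C''$, and one must check this chamber is independent of the chosen apartment — this follows since the relative-position data $C\leadsto_k(\cdot)$ is determined by the retraction onto $\a$ centered at $C$, which is apartment-independent on $\Cone(C)$. Once the bijection is established, the weight bookkeeping is immediate: the cocycle identity $w(C,C')w(C',C'')=w(C,C'')$ from the fourth axiom of a $\Ga$-weight applies precisely because $C\leadsto C'\leadsto C''$, collapsing the double sum to $\sum_{C''}w(C,C'')\,C''=T_{k+l}(C)$.

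The main obstacle I expect is the surjectivity/well-definedness step: showing that for $C\leadsto_{k+l}C''$ the intermediate chamber $C'$ exists and has the right relative positions independently of the apartment chosen, and dually that the intermediate chamber in a factorization is unique. This is where one genuinely uses that type-zero vertices are special (Definition \ref{def2.4.2}), so that the lattice $\La_0$ and the cone decomposition behave additively under translation along $\La$, and that retractions onto an apartment centered at a chamber are well-defined. The remaining assertion, that $T_k$ and $T_l$ commute, is then automatic from $T_kT_l=T_{k+l}=T_{l+k}=T_lT_k$.
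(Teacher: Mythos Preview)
Your approach is essentially the same as the paper's: both establish the bijection between factorizations $C\leadsto C'\leadsto C''$ (with $C'$ in an intermediate relative position) and chambers $C''$ in relative position $k+l$ to $C$, then invoke the transitivity axiom $w(C,C')w(C',C'')=w(C,C'')$ to collapse the weights; the paper simply asserts the existence and uniqueness of the intermediate chamber where you spell out the apartment/retraction argument. One small slip: in expanding $T_kT_l(C)$ you effectively apply $T_k$ first, so your $C'$ is in position $k$ and $C''$ in position $l$ from $C'$---this is $T_lT_k(C)$ rather than $T_kT_l(C)$---but since the target $T_{k+l}$ is symmetric in $k,l$ the argument goes through unchanged.
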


\begin{proof}
On the one hand, the chamber $T_k(T_l(C))$ is in relative position $k+l$ and on the other, for any chamber $D$ in relative position $k+l$ to $C$ there exist uniquely determined chambers $C_k$ and $C_l$ in relative positions $k$ and $l$ such that each apartment containing $C$ and $D$ contains $C_1$ and $C_2$. This and the transitivity of $w$ proves the claim.
\end{proof}

\begin{definition}
A \e{quasicharacter} on $\N^d(\La_0)$ is a map
$\chi: \N^d(\La_0)\to\C^\times$ with $\chi(k+l)=\chi(k)\chi(l)$. 
For a given quasicharacter $\chi$ let
$$
\ell^2(\CC)_\chi
$$
be the generalized eigenspace, i.e, the set of all $v\in\ell^2(\CC)$ such that for every $k\in\N^d(
\La_0)$ one has
$$
(T_k-\chi(k))^mv=0
$$
for some $m\in\N$.
For every non-zero $\chi$ the space $\ell^2(\CC)_\chi$ is finite-dimensional.
Let $m(\chi)\in\N_0$ denote its dimension.
We then get
$$
\tr(T_k)=\sum_{\chi\in\CQ} m(\chi)\chi(k),
$$
where the sum runs over the set $\CQ$ of all  quasi-characters $\chi$.
\end{definition}

\begin{proposition}
For $k\in\N^d(\La_0)$ let $a_k\in\C$ be bounded, i.e., there exists $M>0$ such that $|a_k|\le M$ holds for all $k$.
Then the operator
$$
T=\sum_{0\ne k\in\N^k(\La_0)}a_kT_k
$$
is well defined and maps $\ell^2(\Ga\bs\CC)=\ell^2(\CC)^\Ga$ to itself.
On this Hilbert space, $T$ is a trace class operator.
\end{proposition}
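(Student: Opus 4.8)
The plan is to reduce everything to the single estimate
$$
\sum_{0\ne k\in\N^d(\La_0)}\norm{T_k}_1<\infty ,
$$
where $\norm{\cdot}_1$ denotes the trace norm on $\ell^2(\Ga\bs\CC)$. Since $|a_k|\le M$ for all $k$, such a bound proves all assertions at once: the partial sums $\sum_{|k|\le R}a_kT_k$ then form a Cauchy sequence in trace norm, hence converge in trace norm — and therefore also in operator norm — to a trace class operator $T$, which is thus a well-defined bounded operator on $\ell^2(\Ga\bs\CC)$ with $Tf=\lim_R\sum_{|k|\le R}a_kT_kf$. Beforehand I would record that each $T_k$ really does preserve $\ell^2(\Ga\bs\CC)=\ell^2(\CC)^\Ga$: the weight is $\Ga$-biinvariant and the relation ``in relative position $k$'' is $\Ga$-equivariant, so $T_k$ commutes with the $\Ga$-action; local finiteness of $X$ makes $T_k$ bounded to begin with.

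For the trace norm of a single $T_k$ I would reuse the device used to show that the Bass operator is of trace class: for any bounded operator $S$ on a Hilbert space with orthonormal basis $(e_i)$ one has $\norm{S}_1\le\sum_i\norm{Se_i}$, because $S$ is the sum of the rank-one operators $v\mapsto\sp{v,e_i}Se_i$. Taking $(e_i)$ to be the orthonormal basis of $\ell^2(\Ga\bs\CC)$ indexed by the orbits $[C]\in\Ga\bs\CC$, one estimates $\norm{T_k e_{[C]}}\le\sum_{[D]}\big|\sp{T_k e_{[C]},e_{[D]}}\big|$, and the matrix entry $\sp{T_k e_{[C]},e_{[D]}}$ is, up to a finite stabilizer factor (finiteness of the $\Ga$-stabilizers), a sum of weights $w(C,C')$ over the finitely many (local finiteness of $X$) chambers $C'$ of the orbit $[D]$ lying in relative position $k$ to $C$. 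This yields a bound of the form
$$
\norm{T_k}_1\ \le\ (\text{const})\sum_{[C]\in\Ga\bs\CC}\ \sum_{C':\,C\leadsto_k C'}w(C,C') ,
$$
the inner sum over all such $C'$, not just orbit representatives.

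Summing over $k$ is the crux. For a fixed source chamber $C$ the positions $k$ enumerate a subset of the chambers $C'$ with $C\leadsto C'$, so summing the last bound over $k$ collapses to $\sum_{[C]}\sum_{C':\,C\leadsto C'}w(C,C')$, and it remains to see that this is finite. Here the multiplicativity axiom $w(C,C')w(C',C'')=w(C,C'')$ is essential: it lets one write $w(C,C')$ as a telescoping product of weights of single $\leadsto$-steps, so that, after grouping by $\Ga$-orbits, the sum is governed by the quantities $\beta_{[C]}=\sum_{C'}w(C,C')$ taken over the immediate $\leadsto$-successors $C'$ of $C$; these are bounded in number by local finiteness, whence $\sum_{[C]}\beta_{[C]}$ is bounded by a constant multiple of $\sum_{C,D\in\Ga\bs\CC}w(C,D)$, which is finite by the summability axiom for $\Ga$-weights. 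In particular $\beta_{[C]}\to0$ along any sequence of distinct orbits running into the cones, and a geometric-series estimate then bounds $\sum_{[C]}\sum_{C\leadsto C'}w(C,C')$ by a constant times $\sum_{[C]}\beta_{[C]}<\infty$, the support condition keeping $w$ confined to ``downstream'' orbit pairs. I expect this last combinatorial estimate to be the main obstacle: one must control the multiplicity with which an orbit pair is counted (several $\Ga$-translates of $C'$ lying below $C$, several chambers in one relative position $k$) and, for $d\ge2$, the fact that the $\leadsto$-poset is not a tree; it is precisely the decay furnished by the cocycle identity that makes the sum over positions converge.
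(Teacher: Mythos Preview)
Your strategy coincides with the paper's up to the point where you bound
\[
\sum_{k}\norm{T_k}_1\ \le\ \sum_{[C]\in\Ga\bs\CC}\ \sum_{C'}w(C,C'),
\]
using the criterion $\norm{S}_1\le\sum_i\norm{Se_i}$ for an orthonormal basis. The paper's entire proof is then the single line
\[
\sum_{C\bmod\Ga}\norm{TC}\ \le\ M\sum_{\substack{C,D\bmod\Ga\\ C\leadsto_k D}}w(C,D)\ <\ \infty,
\]
i.e.\ it invokes the third axiom of a $\Ga$-weight, $\sum_{C,D\in\Ga\bs\CC}w(C,D)<\infty$, directly as the finiteness needed. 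No use is made of the cocycle identity, and no geometric-series bootstrap appears.

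Your detour through the multiplicativity axiom is therefore unnecessary, and the argument you sketch for it is the weak point of the proposal. The notion of ``immediate $\leadsto$-successor'' is not well defined (the relation $\leadsto$ has no covering relation in general), and even granting it, the step from $\beta_{[C]}\to 0$ to a geometric-series bound on $\sum_{[C]}\sum_{C\leadsto C'}w(C,C')$ is not justified when the $\leadsto$-poset is not a tree: the number of chains of a given length from $C$ to $C'$ can grow, and the cocycle identity gives you multiplicativity, not decay. You also end up using the summability axiom anyway (to bound $\sum_{[C]}\beta_{[C]}$), only in a weakened form restricted to ``immediate'' pairs, and then try to recover the full strength of the axiom from that restriction---which is circular in spirit. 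The cleaner route is simply to recognise that $\sum_{[C],[D]}w(C,D)<\infty$ is exactly the hypothesis and use it as the paper does. Your observation about orbit multiplicities (several $\Ga$-translates of $C'$ sitting below $C$) is a fair point that the paper's one-liner does not explicitly address, but the resolution is not the cocycle identity.
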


\begin{proof}
We compute
\begin{align*}
\sum_{C\mod \Ga}\norm{TC}
&\le\sum_{\substack{C,D\mod\Ga\\C\leadsto_kD}}M w(C,D)<\infty.
\end{align*}
This implies that $T$ is trace class.
\end{proof}

\begin{definition}
Let $\CT$ denote the unital subring of $\End(\ell^\infty(\CC))$ generated by the translation operators $T_k$ with $k\in\N^d(\La_0)$.
This is a commutative integral domain.
Let $\CK$ denote its quotient field.

For indeterminates $u_1,\dots, u_d$ we define the formal power series
$$
T(u)=\sum_{k\in\N^d(\La_0)}u^kT_k\in\CT[[u_1,\dots,u_d]],
$$
where $u^k=u_1^{k_1}\cdots u_d^{k_d}$.
Note that the summation only runs over the set $\N^d(\La_0)$ of all $k\in\N^d$ such that $\sum_{j=1}^dk_je_j\in\La_0$.
\end{definition}

\begin{theorem}\label{thm2.4}
$T(u)$ is a rational function in $u$.
More precisely, there exists a finite set $E\subset \La_0^+$ and $k(e)\in\N_0^d$ for every $e\in E$ as well as $k(1),\dots,k(d)\in \N^d_0\sm\{0\}$ such that
$$
T(u)=\frac{\sum_{e\in E}u^{k(e)}T_{k(e)}}
{\(1-T_{k(1)}u^{k(1)}\)\cdots \(1-T_{k(d)}u^{k(d)}\)}.
$$
\end{theorem}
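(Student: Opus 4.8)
The plan is to deduce the statement from an elementary decomposition of the additive monoid $\N^d(\La_0)$, using only the relation $T_kT_l=T_{k+l}$ established above. Put $M=\{k\in\Z^d:\sum_{j=1}^dk_je_j\in\La_0\}$, so that $\N^d(\La_0)=\N^d\cap M$. Since $e_1,\dots,e_d$ is a $\Z$-basis of $\La$ and $\La_0\subset\La$ is a subgroup of finite index $N=[\La:\La_0]$, the set $M$ is a sublattice of $\Z^d$ with $N\Z^d\subset M$. Hence for each $i$ the subgroup $\{m\in\Z:m\epsilon_i\in M\}$ of $\Z$ is nonzero ($\epsilon_i$ being the $i$-th standard basis vector), so it equals $m_i\Z$ for a smallest integer $m_i\ge 1$; I would take
$$
k(i)=m_i\epsilon_i\in\N_0^d\sm\{0\}.
$$
Since $\sum_j k(i)_je_j=m_ie_i\in\La_0$, each $k(i)$ lies in $\N_0^d(\La_0)$, so $T_{k(i)}$ is defined; also $\bigoplus_i m_i\Z\subset M\subset\Z^d$, all of finite index.

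Next I would prove the disjoint decomposition with unique representatives
$$
\N^d(\La_0)=\bigsqcup_{r\in E_0}\Bigl(r+\sum_{i=1}^d\N_0\,k(i)\Bigr),\qquad E_0=\{r\in\Z^d:\ 1\le r_i\le m_i\ \forall i,\ r\in M\}.
$$
Indeed $E_0$ is finite, and given $k\in\N^d\cap M$ one writes $k_i=q_im_i+r_i$ with $q_i\ge 0$ and $r_i\in\{1,\dots,m_i\}$ (uniquely, $q_i=\lceil k_i/m_i\rceil-1$), so $k=r+\sum_iq_ik(i)$; since $M$ is a group and $k,\sum_iq_ik(i)\in M$, also $r\in M$, whence $r\in E_0$. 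Conversely every element of the right-hand side lies in $\N^d\cap M$, and uniqueness of the pair $(r,q)$ follows from uniqueness of the coordinatewise division.

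Finally I would pass to the translation operators inside the power series ring $\CT[[u_1,\dots,u_d]]$. As $k(i)\ne 0$, the monomial $u^{k(i)}$ has positive total degree, so $1-u^{k(i)}T_{k(i)}$ is invertible with inverse $\sum_{q\ge 0}(u^{k(i)}T_{k(i)})^q$. Using the decomposition above together with $T_{r+\sum_iq_ik(i)}=T_r\prod_iT_{k(i)}^{q_i}$ (which follows from $T_kT_l=T_{k+l}$ and the commutativity of the $T_k$), and observing that each multidegree receives only finitely many contributions, one obtains
$$
T(u)=\sum_{r\in E_0}u^rT_r\prod_{i=1}^d\sum_{q\ge 0}\bigl(u^{k(i)}T_{k(i)}\bigr)^q=\frac{\sum_{r\in E_0}u^rT_r}{\prod_{i=1}^d\bigl(1-u^{k(i)}T_{k(i)}\bigr)}.
$$
Setting $E=\{\sum_jr_je_j:r\in E_0\}\subset\La_0^+$ (injective, as the $e_j$ are linearly independent) and $k(e)=r$ for $e=\sum_jr_je_j$ gives exactly the asserted expression.

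The only point needing real care is the bookkeeping in this last step: one must check that the rearrangement of $\sum_{k\in\N^d(\La_0)}u^kT_k$ into a finite sum times a product of geometric series is legitimate in $\CT[[u]]$ — this is where disjointness and uniqueness of the decomposition and the nonvanishing of the $k(i)$ are used — and one must keep the remainders $r_i$ in the range $\{1,\dots,m_i\}$ rather than $\{0,\dots,m_i-1\}$, so that $E\subset\La_0^+$ and every $T_{k(e)}$ is well defined, matching the convention that the sum defining $T(u)$ runs over $\N^d$ and not $\N_0^d$. Everything else — existence of the $m_i$, that $r\in M$, finiteness of $E_0$ — is immediate from $[\La:\La_0]<\infty$ and the group structure of $M$; in fact the underlying monoid decomposition is just an explicit instance of Gordan's lemma applied to the sublattice $M$.
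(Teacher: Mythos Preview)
Your argument is correct. The monoid decomposition of $\N^d(\La_0)$ via the minimal positive multiples $k(i)=m_i\epsilon_i$ and the box of remainders $E_0$ is the natural way to do this, and once one has it, the identity $T_kT_l=T_{k+l}$ from the lemma immediately gives the geometric-series factorisation in $\CT[[u_1,\dots,u_d]]$. Your care with the remainder range $\{1,\dots,m_i\}$ (so that $r\in\N^d$ rather than $\N_0^d$) is exactly what is needed to match the summation over $\N^d(\La_0)$ and to land in $\La_0^+$.

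The paper itself does not give a proof here: it simply cites Theorem~3.1.4 of \cite{bldglat} for the trivial-weight case and remarks that the argument carries over to general weights. Your write-up is thus more informative than the paper's, and it makes transparent that the only input from the building/weight side is the semigroup relation $T_kT_l=T_{k+l}$; everything else is the elementary lattice/monoid decomposition (an explicit instance of Gordan's lemma, as you note). Whether or not your argument coincides with the one in the cited reference cannot be judged from the paper alone, but your proof is self-contained and sound.
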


\begin{proof}
The case of trivial weight is Theorem 3.1.4 of \cite{bldglat}.
The proof given there extends without problems to the case of general weight.
\end{proof}

\begin{definition}
We say that $u\in\C^d$ is \e{singular} if there exists $1\le j\le d$ such that $u^{-k(j)}$ is an eigenvalue of $T_{k(j)}$.
Otherwise, $u$ is \e{regular}.
The singular set is a countable union of complex submanifolds of codimension 1 in $\C^d$, so the regular set is connected, open and dense. 
\end{definition}

\begin{proposition}
The family $u\mapsto T(u)$ is a meromorphic family of trace class operators on $\C^d$.
It is holomorphic on the regular set.
The map
$$
Z(u)=\tr T(u)
$$
is meromorphic on $\C^d$ and holomorphic on the regular set.
We have
\begin{align*}
Z(u)=\sum_{\chi\in\CQ}m(\chi)
\frac{\sum_{e\in E}u^{k(e)}\chi(k(e))}
{\(1-u^{k(1)}\chi(k(1))\)\cdots \(1-u^{k(d)}\chi(k(d))\)}
\end{align*}
\end{proposition}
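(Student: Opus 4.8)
The plan is to reduce everything to the rational expression of Theorem~\ref{thm2.4}, combined with the Fredholm determinant bookkeeping already used in Section~\ref{sec1} and the generalized eigenspace decomposition into the spaces $\ell^2(\CC)_\chi$. For the holomorphy on the regular set, note first that the finitely many operators $T_{k(1)},\dots,T_{k(d)}$ and $T_{k(e)}$ ($e\in E$) occurring in Theorem~\ref{thm2.4} are of trace class, by the preceding proposition applied with coefficients supported at a single point (equivalently, by the estimate in its proof). In particular each $T_{k(j)}$ is compact, so $1-u^{k(j)}T_{k(j)}$ is invertible exactly when $u^{-k(j)}\notin\sigma(T_{k(j)})$, i.e.\ precisely off the singular set, and there $(1-u^{k(j)}T_{k(j)})^{-1}$ is holomorphic in the operator norm. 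The numerator $\sum_{e\in E}u^{k(e)}T_{k(e)}$ is a finite sum, hence entire as a trace class valued function, and a product of a trace class operator with bounded operators is again trace class; so Theorem~\ref{thm2.4} exhibits $T(u)$ as a holomorphic, trace class valued function on the regular set.

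For meromorphy on all of $\C^d$ I would clear denominators with Fredholm determinants, exactly as the adjugate $B(u)=\det(1-uT)(1-uT)^{-1}$ was used in Section~\ref{sec1}. For each $j$ the operator-valued function $B_j(u):=\det\!\big(1-u^{k(j)}T_{k(j)}\big)\big(1-u^{k(j)}T_{k(j)}\big)^{-1}$ extends to an entire family, holomorphic in the operator norm (write $(1-A)^{-1}=1+A(1-A)^{-1}$), and $\Delta(u):=\prod_{j=1}^{d}\det\!\big(1-u^{k(j)}T_{k(j)}\big)$ is an entire scalar function whose zero set is exactly the singular set. Hence $\Delta(u)\,T(u)=\big(\sum_{e\in E}u^{k(e)}T_{k(e)}\big)\prod_{j=1}^{d}B_j(u)$ extends to an entire, trace class valued function $G(u)$, so $T(u)=G(u)/\Delta(u)$ is a meromorphic family of trace class operators on $\C^d$, holomorphic on the regular set. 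Since the trace is a continuous linear functional on trace class operators, $Z(u)=\tr T(u)=\tr G(u)/\Delta(u)$ is meromorphic on $\C^d$ and holomorphic on the regular set.

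For the explicit formula I would work on the closed unit polydisc $\{|u_i|\le 1\}$. Grouping pairs of chambers by their relative position gives $\sum_k\|T_k\|_{\mathrm{tr}}\le\sum_{C,D\in\Ga\bs\CC}w(C,D)<\infty$, so $T(u)=\sum_k u^kT_k$ converges in trace norm there, and using $\tr T_k=\sum_{\chi}m(\chi)\chi(k)$ together with $\sum_\chi m(\chi)|\chi(k)|\le\|T_k\|_{\mathrm{tr}}$ we may rearrange to obtain $Z(u)=\sum_k u^k\tr T_k=\sum_{\chi\in\CQ}m(\chi)\sum_k u^k\chi(k)$. It remains to evaluate the inner sum. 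Restrict the identity of Theorem~\ref{thm2.4} to the finite-dimensional, $\CT$-invariant subspace $\ell^2(\CC)_\chi$: the commuting operators $T_k|_{\ell^2(\CC)_\chi}$ each have $\chi(k)$ as their only eigenvalue, so after simultaneous upper-triangularization every polynomial (or convergent series) in them is upper triangular with the corresponding scalar expression on the diagonal. Reading off the diagonal in $T(u)\prod_{j}(1-u^{k(j)}T_{k(j)})=\sum_{e}u^{k(e)}T_{k(e)}$ gives $\big(\sum_k u^k\chi(k)\big)\prod_{j=1}^{d}\big(1-u^{k(j)}\chi(k(j))\big)=\sum_{e\in E}u^{k(e)}\chi(k(e))$, whence $\sum_k u^k\chi(k)=\big(\sum_{e}u^{k(e)}\chi(k(e))\big)\big/\prod_{j}\big(1-u^{k(j)}\chi(k(j))\big)$. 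Substituting yields the claimed formula on the polydisc, and it propagates to all of $\C^d$ by uniqueness of meromorphic continuation.

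The main obstacle is not conceptual but a matter of careful bookkeeping: justifying the interchange of the double sum over $k$ and $\chi$ on the polydisc, checking that $\ell^2(\CC)_\chi$ is genuinely invariant under the whole algebra $\CT$ so that the restriction of the identity of Theorem~\ref{thm2.4} is legitimate, and passing cleanly between the formal power series identity, its range of absolute convergence, and the meromorphic continuation supplied by the Fredholm determinant argument, so that the final display is an equality of bona fide meromorphic functions on $\C^d$ rather than merely of formal series.
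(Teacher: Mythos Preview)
Your proposal is correct and follows exactly the route the paper intends: everything is deduced from the rational expression of Theorem~\ref{thm2.4} together with the fact that the $T_k$ are trace class. The paper's own proof is a single sentence (``This is clear from the theorem and the fact that all $T_k$ are trace class''), so your Fredholm-determinant clearing for meromorphy, the polydisc convergence estimate $\sum_k\|T_k\|_{\mathrm{tr}}<\infty$, and the simultaneous triangularization on $\ell^2(\CC)_\chi$ are precisely the details the paper leaves implicit rather than a different argument.
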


\begin{proof}
This is clear from the theorem and the fact that all $T_k$ are trace class.
\end{proof}

\section{The prime geodesic theorem for a weighted building quotient}

Let $\D^\times=\{z\in\C: 0<|z|<1\}$.

\begin{theorem} [Prime Geodesic Theorem]
For $k\in\N^d(\La_0)$ let 
$$
N(k)=\sum_{\substack{C\mod\Ga\\ \exists_{\ga\in\Ga}C\leadsto\ga C}}w(C,\ga C).
$$
There are $z_1,\dots,z_N\in\C^d$ such that
$$
N(k)\ \sim\ \sum_{j=1}^N z_j^k,
$$
as $k_j\to\infty$ independently.
Moreover, there exists a sequence $z_j\in (\C^\times)^d$ with $\lim_jz_j=0$ such that for every $k\in\N^d(\La_0)$ we have, with absolute convergence of the sum,
$$
N(k)=\sum_{j=1}^\infty z_j^k.
$$
\end{theorem}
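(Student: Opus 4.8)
The plan is to reduce the statement to a trace identity and the spectral decomposition of the translation operators, to prove the exact expansion first, and to obtain the asymptotic by isolating the leading terms.

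\emph{Step 1: a trace identity.} I would first show that $N(k)=\tr(T_k)$ on $\ell^2(\Ga\bs\CC)=\ell^2(\CC)^\Ga$, where $T_k$ is trace class. Passing to the orthonormal basis attached to the $\Ga$-orbits of chambers (the class of $C$ normalised by $|\Ga_C|^{-1/2}$), the diagonal matrix coefficient of $T_k$ at $C$ equals $|\Ga_C|^{-1}\sum_{\ga\in\Ga,\ C\leadsto_k\ga C}w(C,\ga C)$; $\Ga$-invariance of $w$ makes this independent of the representative of the orbit, and summing over $C\bmod\Ga$ gives $N(k)$. Equivalently, $\sum_kN(k)u^k=\tr T(u)=Z(u)$. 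This is the analogue of the identity $\tr T^n=\sum_{\ell(c)=n}\ell(c_0)w(c)$ of the graph case, the semigroup $\N^d(\La_0)$ now playing the role of lengths, and it is the only step where the weight axioms and the geometry of $\leadsto_k$ are used; any length-type multiplicity factor is absorbed in the trace normalisation.

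\emph{Step 2: the exact expansion.} Since $T_k$ is trace class, the identity $\tr(T_k)=\sum_{\chi\in\CQ}m(\chi)\chi(k)$ from the previous section converges absolutely, $\sum_\chi m(\chi)|\chi(k)|$ being finite for every fixed $k$. Every non-zero quasicharacter $\chi$ of $\N^d(\La_0)$ extends to a homomorphism of the finite-index subgroup of $\Z^d$ that it generates, and then, $\C^\times$ being divisible, to a homomorphism $\Z^d\to\C^\times$; thus $\chi(k)=z(\chi)^k$ for some $z(\chi)\in(\C^\times)^d$, whose coordinate moduli $|z(\chi)_i|$ are intrinsic to $\chi$. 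Listing each $z(\chi)$ with multiplicity $m(\chi)$ gives a sequence $(z_j)$ in $(\C^\times)^d$ with $N(k)=\sum_{j\ge1}z_j^k$, absolutely convergent for every $k\in\N^d(\La_0)$.

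\emph{Step 3: $z_j\to0$ (the main obstacle).} By Theorem \ref{thm2.4} there are $d$ operators $T_{k(1)},\dots,T_{k(d)}$, and $k(1),\dots,k(d)\in\N_0^d\sm\{0\}$ must be $\R$-linearly independent, since otherwise $\bigcup_{e\in E}(k(e)+\N_0k(1)+\dots+\N_0k(d))$ could not exhaust $\N^d(\La_0)$. Each $T_{k(j)}$ is compact, so its non-zero eigenvalues form a null sequence; $\chi(k(j))$ is such an eigenvalue whenever $m(\chi)\ne0$, and $\sum_\chi m(\chi)|\chi(k(j))|<\infty$ since $T_{k(j)}$ is trace class. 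With $M$ the matrix whose columns are $k(1),\dots,k(d)$, one has, as column vectors, $(\log|z(\chi)_1|,\dots,\log|z(\chi)_d|)=(M^\top)^{-1}(\log|\chi(k(1))|,\dots,\log|\chi(k(d))|)$, so after ordering the $\chi$ by decreasing $\max_j|\chi(k(j))|$ the only danger is an ``unbalanced'' family of $\chi$ in which a negative entry of $(M^\top)^{-1}$ turns a very small $|\chi(k(j))|$ into a large coordinate of $z(\chi)$. Such a family cannot occur: if it did, $\sum_jz_j^k$ would diverge at some $k\in\N^d(\La_0)$ having one large coordinate and the others as small as the condition $\sum_ik_ie_i\in\La_0$ permits, contradicting the absolute convergence of Step 2. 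Hence $\|z_j\|\to0$, which is the second assertion. I expect this step, and in particular the control of the ``unbalanced'' characters, to be the technical heart of the argument.

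\emph{Step 4: the asymptotic.} Because $\|z_j\|\to0$ there are only finitely many $z(\chi)$ that are maximal for the coordinatewise order, and every other $z(\chi)$ is strictly smaller in at least one coordinate than one of them. To see that these dominate simultaneously in all coordinates I would apply the Perron--Frobenius theory of Section \ref{sec1}: Proposition \ref{prop1.5}, together with the irreducible case on each top spectral component, applied to the commuting family of $\CC$-positive trace-class operators $T_{k(1)},\dots,T_{k(d)}$, produces a common totally positive eigenvector, hence a single quasicharacter $\chi_0$ with $|z(\chi_0)_i|$ maximal in every coordinate at once, together with its finitely many root-of-unity rotations; these are $z_1,\dots,z_N$. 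The remainder $N(k)-\sum_{j=1}^Nz_j^k=\sum_\chi m(\chi)z(\chi)^k$, the sum now running over the non-maximal $\chi$, is then of strictly smaller exponential type as $k_1,\dots,k_d\to\infty$ independently, which yields $N(k)\sim\sum_{j=1}^Nz_j^k$ (with the proviso, familiar from Theorem \ref{thm3.1}, that the dominant sum may vanish for particular $k$ through cancellation among the rotations).
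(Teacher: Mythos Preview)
Your core approach coincides with the paper's: both arguments reduce to the trace identity $N(k)=\tr(T_k)=\sum_{\chi\in\CQ}m(\chi)\chi(k)$ and then write each quasicharacter as $\chi(k)=z_\chi^k$ for some $z_\chi\in(\C^\times)^d$, listing these with multiplicity to obtain the sequence $(z_j)$. The paper's proof is extremely brief: after this point it simply asserts that $z_j\to 0$ and that the asymptotic ``follows'' from this, with no further justification of either claim.

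Your Steps 1--2 are thus faithful to the paper, only more explicit (your remark that the extension $\chi\mapsto z_\chi$ uses divisibility of $\C^\times$, and that only the moduli $|z(\chi)_i|$ are intrinsic, is a genuine addition). Your Step 3, which you flag as the technical heart, is work the paper does not do at all; the argument via linear independence of $k(1),\dots,k(d)$ and the contradiction with absolute convergence of $\sum_\chi m(\chi)|\chi(k)|$ for suitably chosen $k$ is a reasonable strategy, though the treatment of the ``unbalanced'' case is still a sketch rather than a proof.

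Your Step 4 diverges from the paper. The paper deduces the asymptotic in one line from $z_j\to 0$, without isolating a coordinatewise-maximal quasicharacter. You instead invoke the Perron--Frobenius material of Section~\ref{sec1} to produce a common totally positive eigenvector for the commuting family $T_{k(1)},\dots,T_{k(d)}$. Be aware that Section~\ref{sec1} (and Proposition~\ref{prop1.5} in particular) treats a \emph{single} $E$-positive trace-class operator; the existence of a simultaneous positive eigenvector for several commuting such operators is not established there and would need its own argument. So this step, while plausible and in the spirit of Theorem~\ref{thm3.1}, goes beyond both the paper's proof and the tools the paper actually provides.
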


\begin{proof}
We show the last asertion first.
By definition we get $N(k)=\tr(T_k)$ and so
\begin{align*}
N(k)&=\sum_{\chi\in\CQ}m(\chi)\chi(k).
\end{align*}
As $\chi\in\CQ$ is a quasi-character, there exists $z_\chi\in\(\C^\times\)^d$ with
$$
\chi(k)=z_\chi^k.
$$
Now let $(z_j)$ be the sequence that runs through all $z_\chi$ with $m(\chi)\ne 0$, where each $z_\chi$ is repeated with multiplicity $m(\chi)$.
The claim follows.
The first claim follows from the fact that $z_j\to 0$ in $\C^d$.
\end{proof}

Writing $z_j=(z_{j,1},\dots,z_{j,d})\in(\C^\times)^d$ we write the statement of the theorem as
$$
N(k_1,k_2,\dots,k_d)=\sum_{j=1}^\infty z_{j,1}^{k_1}\cdots z_{j,d}^{k_d}.
$$

\newpage
\begin{bibdiv} \begin{biblist}

\bib{Bldgs}{book}{
   author={Abramenko, Peter},
   author={Brown, Kenneth S.},
   title={Buildings},
   series={Graduate Texts in Mathematics},
   volume={248},
   note={Theory and applications},
   publisher={Springer, New York},
   date={2008},
   pages={xxii+747},
   isbn={978-0-387-78834-0},
   doi={10.1007/978-0-387-78835-7},
}

\bib{Abram}{book}{
   author={Abramovich, Y. A.},
   author={Aliprantis, C. D.},
   title={An invitation to operator theory},
   series={Graduate Studies in Mathematics},
   volume={50},
   publisher={American Mathematical Society, Providence, RI},
   date={2002},
   pages={xiv+530},
   isbn={0-8218-2146-6},
   doi={10.1090/gsm/050},
}

\bib{BallmannBrin}{article}{
   author={Ballmann, Werner},
   author={Brin, Michael},
   title={Orbihedra of nonpositive curvature},
   journal={Inst. Hautes \'Etudes Sci. Publ. Math.},
   number={82},
   date={1995},
   pages={169--209 (1996)},
   issn={0073-8301},
}

\bib{Bass}{article}{
   author={Bass, Hyman},
   title={The Ihara-Selberg zeta function of a tree lattice},
   journal={Internat. J. Math.},
   volume={3},
   date={1992},
   number={6},
   pages={717--797},
   issn={0129-167X},
   doi={10.1142/S0129167X92000357},
}

\bib{treelat}{book}{
   author={Bass, Hyman},
   author={Lubotzky, Alexander},
   title={Tree lattices},
   series={Progress in Mathematics},
   volume={176},
   note={With appendices by Bass, L. Carbone, Lubotzky, G. Rosenberg and J.
   Tits},
   publisher={Birkh\"{a}user Boston, Inc., Boston, MA},
   date={2001},
   pages={xiv+233},
   isbn={0-8176-4120-3},
   doi={10.1007/978-1-4612-2098-5},
}

\bib{BruhatTits}{article}{
   author={Bruhat, F.},
   author={Tits, J.},
   title={Groupes r\'{e}ductifs sur un corps local},
   language={French},
   journal={Inst. Hautes \'{E}tudes Sci. Publ. Math.},
   number={41},
   date={1972},
   pages={5--251},
   issn={0073-8301},
}

\bib{class}{article}{
   author={Deitmar, Anton},
   title={Class numbers of orders in cubic fields},
   journal={J. Number Theory},
   volume={95},
   date={2002},
   number={2},
   pages={150--166},
   issn={0022-314X},
}

\bib{ADhigherRank}{article}{
   author={Deitmar, A.},
   title={A prime geodesic theorem for higher rank spaces},
   journal={Geom. Funct. Anal.},
   volume={14},
   date={2004},
   number={6},
   pages={1238--1266},
   issn={1016-443X},
   doi={10.1007/s00039-004-0490-7},
}

\bib{classNC}{article}{
   author={Deitmar, Anton},
   author={Hoffmann, Werner},
   title={Asymptotics of class numbers},
   journal={Invent. Math.},
   volume={160},
   date={2005},
   number={3},
   pages={647--675},
   issn={0020-9910},
   doi={10.1007/s00222-004-0423-y},
}

\bib{ADCont}{article}{
   author={Deitmar, Anton},
   title={Ihara zeta functions and class numbers},
   journal={Adv. Studies in Contemp. Math.},
   volume={24},
   date={2014},
   number={4},
   pages={439-450},
}

\bib{weightedIhara}{article}{
   author={Deitmar, Anton},
   title={Ihara zeta functions of infinite weighted graphs},
   journal={SIAM J. Discrete Math.},
   volume={29},
   date={2015},
   number={4},
   pages={2100--2116},
   issn={0895-4801},
   doi={10.1137/140957925},
}

\bib{PGTbuild}{article}{
   author={Deitmar, Anton},
   author={McCallum, Rupert},
   title={A prime geodesic theorem for higher rank buildings},
   journal={Kodai Math. J.},
   volume={41},
   date={2018},
   number={2},
   pages={440--455},
   issn={0386-5991},
   doi={10.2996/kmj/1530496852},
}

\bib{treelattice}{article}{
   author={Deitmar, Anton},
   author={Kang, Ming-Hsuan},
   title={Tree-lattice zeta functions and class numbers},
   journal={Michigan Math. J.},
   volume={67},
   date={2018},
   number={3},
   pages={617--645},
   issn={0026-2285},
   doi={10.1307/mmj/1529460323},
}

\bib{bldglat}{article}{
   author={Deitmar, Anton},
   author={Kang, Ming-Hsuan},
   author={McCallum, Rupert},
   title={Building lattices and zeta functions},
   journal={Adv. in Geom.},
   date={2019},
}

\bib{Gant}{book}{
   author={Gantmacher, F. R.},
   title={The theory of matrices. Vols. 1, 2},
   series={Translated by K. A. Hirsch},
   publisher={Chelsea Publishing Co., New York},
   date={1959},
   pages={Vol. 1, x+374 pp. Vol. 2, ix+276},
}

\bib{Garrett}{book}{
   author={Garrett, Paul},
   title={Buildings and classical groups},
   publisher={Chapman \& Hall, London},
   date={1997},
   pages={xii+373},
   isbn={0-412-06331-X},
   doi={10.1007/978-94-011-5340-9},
}

\bib{Hashi}{article}{
   author={Hashimoto, Ki-ichiro},
   title={Artin type $L$-functions and the density theorem for prime cycles
   on finite graphs},
   journal={Internat. J. Math.},
   volume={3},
   date={1992},
   number={6},
   pages={809--826},
   issn={0129-167X},
   doi={10.1142/S0129167X92000370},
}

\bib{Hejhal}{book}{
   author={Hejhal, Dennis A.},
   title={The Selberg trace formula for ${\rm PSL}(2,R)$. Vol. I},
   series={Lecture Notes in Mathematics, Vol. 548},
   publisher={Springer-Verlag, Berlin-New York},
   date={1976},
   pages={vi+516},
   review={\MR{0439755}},
}

\bib{Huber}{article}{
   author={Huber, Heinz},
   title={Zur analytischen Theorie hyperbolischen Raumformen und
   Bewegungsgruppen},
   language={German},
   journal={Math. Ann.},
   volume={138},
   date={1959},
   pages={1--26},
   issn={0025-5831},
   doi={10.1007/BF01369663},
}

\bib{Iwaniec}{article}{
   author={Iwaniec, Henryk},
   title={Prime geodesic theorem},
   journal={J. Reine Angew. Math.},
   volume={349},
   date={1984},
   pages={136--159},
   issn={0075-4102},
   doi={10.1515/crll.1984.349.136},
}

\bib{Katsunada}{article}{
   author={Katsuda, Atsushi},
   author={Sunada, Toshikazu},
   title={Homology and closed geodesics in a compact Riemann surface},
   journal={Amer. J. Math.},
   volume={110},
   date={1988},
   number={1},
   pages={145--155},
   issn={0002-9327},
   doi={10.2307/2374542},
}

\bib{Koyama}{article}{
   author={Koyama, Shin-ya},
   title={Prime geodesic theorem for arithmetic compact surfaces},
   journal={Internat. Math. Res. Notices},
   date={1998},
   number={8},
   pages={383--388},
   issn={1073-7928},
   doi={10.1155/S1073792898000257},
}

\bib{Margulis}{book}{
   author={Margulis, Grigoriy A.},
   title={On some aspects of the theory of Anosov systems},
   series={Springer Monographs in Mathematics},
   note={With a survey by Richard Sharp: Periodic orbits of hyperbolic
   flows;
   Translated from the Russian by Valentina Vladimirovna Szulikowska},
   publisher={Springer-Verlag, Berlin},
   date={2004},
   pages={vi+139},
   isbn={3-540-40121-0},
   doi={10.1007/978-3-662-09070-1},
}

\bib{Sarnak}{article}{
   author={Sarnak, Peter},
   title={Class numbers of indefinite binary quadratic forms},
   journal={J. Number Theory},
   volume={15},
   date={1982},
   number={2},
   pages={229--247},
   issn={0022-314X},
   doi={10.1016/0022-314X(82)90028-2},
}

\bib{Schae}{book}{
   author={Schaefer, Helmut H.},
   title={Banach lattices and positive operators},
   note={Die Grundlehren der mathematischen Wissenschaften, Band 215},
   publisher={Springer-Verlag, New York-Heidelberg},
   date={1974},
   pages={xi+376},
}

\bib{Sound}{article}{
   author={Soundararajan, K.},
   author={Young, Matthew P.},
   title={The prime geodesic theorem},
   journal={J. Reine Angew. Math.},
   volume={676},
   date={2013},
   pages={105--120},
   issn={0075-4102},
}
\end{biblist} \end{bibdiv}

{\small Mathematisches Institut\\
Auf der Morgenstelle 10\\
72076 T\"ubingen\\
Germany\\
\tt deitmar@uni-tuebingen.de}

\today

\end{document}